\documentclass{amsart}
\usepackage[dvips]{graphics}
\newtheorem{theorem}{Theorem}[section]

\theoremstyle{definition}
\newtheorem{definition}[theorem]{Definition}
\newtheorem{example}[theorem]{Example}

\theoremstyle{remark}
\newtheorem{remark}[theorem]{Remark}

\numberwithin{equation}{section}

\theoremstyle{problem}
\newtheorem{problem}[theorem]{Problem}

\newcommand{\sgn}{\mathop{\rm sgn}\nolimits}

\newcommand{\I}{\mathop{\rm Im}\nolimits}
\newcommand{\R}{\mathop{\rm Re}\nolimits}
\newcommand{\const}{\mbox{const}}

\setlength{\arraycolsep}{2pt}



\newcommand{\Md}{\partial}

\newcommand{\ov}[1]{\overline{#1}}

\newcommand{\Ga}{\alpha}
\newcommand{\Gb}{\beta}
\newcommand{\Gd}{\delta}
\newcommand{\Ge}{\epsilon}

\newcommand{\Gf}{\phi}
\newcommand{\Gvf}{\varphi}
\newcommand{\Gg}{\gamma}
\newcommand{\Gc}{\chi}

\newcommand{\Gk}{\kappa}

\newcommand{\Gl}{\lambda}
\newcommand{\Gn}{\eta}
\newcommand{\Gm}{\mu}

\newcommand{\Gt}{\theta}

\newcommand{\Gr}{\rho}

\newcommand{\Gs}{\sigma}

\newcommand{\Go}{\omega}
\newcommand{\Gx}{\xi}

\newcommand{\Gz}{\zeta}
\newcommand{\GD}{\Delta}
\newcommand{\GF}{\Phi}
\newcommand{\GG}{\Gamma}
\newcommand{\GL}{\Lambda}
\newcommand{\GP}{\Pi}
\newcommand{\GT}{\Theta}

\newcommand{\GO}{\Omega}


\def\Ba{{\bf a}}
\def\Bb{{\bf b}}

\def\BA{{\bf A}}

\def\BE{{\bf E}}

\def\BH{{\bf H}}

\def\BJ{{\bf J}}

\newcommand{\beq}{\begin{equation}}
\newcommand{\eeq}{\end{equation}}
\newcommand{\barr}{\begin{eqnarray}}
\newcommand{\earr}{\end{eqnarray}}
\newcommand{\beqn}{\begin{equation*}}
\newcommand{\eeqn}{\end{equation*}}
\newcommand{\barrn}{\begin{eqnarray*}}
\newcommand{\earrn}{\end{eqnarray*}}

\newcommand{\fr}{\frac}

\begin{document}

\title[Hilbert problem for a multiply connected circular domain]{Hilbert problem for a multiply connected circular domain and the analysis of the Hall effect in a plate}
\author{Y.A. Antipov}
\address{Department of Mathematics, Louisiana State University, Baton
Rouge, Louisiana 70803}
\email{antipov@math.lsu.edu}
\thanks{The first author was supported in part by NSF Grant DMS0707724.}

\author{V.V. Silvestrov}
\address{Department of Mathematics,
Gubkin Russian State University of Oil and Gas, Moscow 119991, Russia}
\thanks{The second author was supported in part by Russian Foundation for Basic Research Grant 07-01-00038.}

\subjclass[2000]{Primary 30E25, 32N15; Secondary 74F15}



\keywords{Riemann-Hilbert problem, automorphic functions, Schottky group, Hall effect}

\begin{abstract}

In this paper we analyze the Hilbert boundary-value  problem
of the theory of analytic functions for an $(N+1)$-connected circular domain.
An exact series-form solution has already been derived for the case
of continuous coefficients. Motivated by the study of the Hall effect
in a multiply connected plate  we extend these results by examining the case of discontinuous
coefficients. The Hilbert problem maps into the Riemann-Hilbert  
problem for symmetric piece-wise meromorphic functions invariant 
with respect to a symmetric Schottky group. The solution to this problem
is derived in terms of two analogues of the Cauchy kernel, quasiautomorphic
and quasimultiplicative kernels. The former kernel is known for any symmetry 
Schottky group. We prove the existence theorem for the second, quasimultiplicative, 
kernel for any Schottky group
(its series representation is known for the first class groups only).
We also show that the use of an automorphic kernel requires the solution to
the associated real analogue of the Jacobi inversion problem which can be bypassed
if we employ the quasiautomorphic
and quasimultiplicative kernels. We apply this theory  
to a model steady-state problem on the motion of charged electrons in a plate
with $N+1$ circular holes 
with electrodes and dielectrics on the walls when the conductor is placed 
at right angle to  the  applied magnetic field.

\end{abstract}

\maketitle

\setcounter{equation}{0}

\section{Introduction}


Let $D(\ni\infty)$ be an $(N+1)$-connected domain, a complex $z$-plane with $N+1$
holes bounded by Lyapunov contours $L_\nu$ ($\nu=0,1,\ldots,N$), and let
$a(t)$, $b(t)$, and $c(t)$ be some prescribed real functions H\"older-continuous on
the contour $L=\cup_{\nu=0}^N L_\nu$.
The second fundamental boundary-value
problem of the theory of analytic functions, the Hilbert
problem, requires
the finding of all functions $\Gf(z)$ that are single-valued and analytic in $D$,
H\"older-continuous up to the boundary $L=\cup_{\nu=0}^N L_\nu$ and satisfying
the boundary condition
\beq
\R[\ov{f(t)}\Gf(t)]=c(t), \quad t\in L,
\label{1.1}
\eeq
where $f(t)=a(t)+ib(t)$.

If at least one of the contours $L_\nu$ is not a circle, and $N\ge 1$, then 
the Hilbert problem cannot be solved exactly. In this case, by the method
of the regularizing Schwarz factor \cite{vek},
\cite{gak}, it can be reduced to a system of singular integral equations.
Alternatively, the solution can be expressed through a basis 
of $N+1$ complex harmonic measures \cite{zve}. This method, in addition to
the Schwarz factor, uses the theory of the Riemann-Hilbert problem
on a Riemann surface and requires the solution to a real analogue of the classical Jacobi
inversion problem.

In the case when all the contours $L_\nu$ are circles, and the functions $a(t)$,
$b(t)$, and $c(t)$ are H\"older-continuous on $L$, the Hilbert problem admits an exact 
solution in a series form \cite{aks}, \cite{ale}, \cite{sil}.
One of the ways to solve the problem in this case  is to convert the
original Hilbert problem into a Riemann-Hilbert problem for symmetric piece-wise
meromorphic functions invariant with respect to a Schottky group of symmetric M\"obius
transformations \cite{sil}. This idea was used in the study of steady-state flow around
$N+1$ cylinders with porous walls \cite{ant}. Its solution requires the analysis
of a Riemann-Hilbert problem with continuous coefficients. Recently \cite{ant1},   
the method was extended to free boundary problems on supercavitating flow in multiply connected domains. 
The key step in the solution procedure is the determination of a conformal
mapping in terms of the solutions to two Hilbert problems for a multiply connected
circular domain. The first problem has continuous coefficients whilst
the second one is a homogeneous problem with the coefficient
\beq
G(\Gx)=\left\{\begin{array}{cc}
-1, & \Gx\in L_j',  \\
1, & \Gx\in L_j'',  \\
\end{array}
\right. \quad j=0,1,\ldots,N,
\label{1.2}
\eeq
where $L_j=L_j'\cup L_j''$ and $L_j$ are circles.

In the present paper,
motivated by an electromagnetic problem for a Hall semiconductor 
with $N+1$ circular holes,
we analyze the general case of the Hilbert problem (\ref{1.1})
with discontinuous functions  $a(t)$, $b(t)$, and $c(t)$. 
The actual physical problem is homogeneous, and the coefficients $a(t)$ and $b(t)$
are discontinuous functions. The discontinuity is caused by the presence
of electrodes and dielectrics on the walls of the holes.
Due to the generalized Ohm's law describing the Hall effect the boundary 
conditions on the electrodes and the dielectrics, $E_\tau=0$ and $J_n=0$,
respectively, and the Maxwell equations give rise to a particular case of the Hilbert
problem with piece-wise continuous coefficients $a(t)$ and $b(t)$. Here $E_\tau$ is
the tangent component of the electric field intensity, and $J_n$ is the normal
component of the current intensity.

Various authors \cite{wic},  \cite{hae1}, \cite{hae2}, 
 \cite{ver1}, \cite{ver2}, \cite{eme} investigated the electrical characteristics of Hall plates.
These papers adopt the method of conformal maps devised by Wick \cite{wic}
for simply connected Hall plates. The method of conformal maps was further developed and
numerically implemented in  \cite{tre}, \cite{dri}
for simply connected polygonal domains. Some particular cases of a doubly connected
Hall plate in the form of an annulus with a pair of symmetric electrodes were considered
in \cite{hae1}, \cite{eme}, where 
exact solutions were derived in terms of elliptic functions.
To the knowledge of the authors, an analytical solution to
the  general case of the electromagnetic 
problem for a circular multiply connected Hall plate with any finite number
of electrodes and dielectrics on the walls is not available in the literature.

One of the steps of the solution to the Hilbert problem with discontinuous
coefficients is the factorization problem. Its solution was  derived \cite{antcro}
for an $(N+1)$-connected
circular domain 
in terms of an automorphic analogue of the Cauchy kernel. The kernel was expressed
through the Schottky-Klein prime function of the associated Schottky double. 
This procedure requires eliminating singularities at extra 
poles of the kernel by solving a real analogue of the Jacobi inversion problem
and normalizing the basis of abelian integrals.

In this paper we aim to derive 
an exact solution to the general case of the Hilbert problem (\ref{1.1})
with discontinuous coefficients 
for an $(N+1)$-connected circular domain.   
First, we shall reduce the Hilbert problem (\ref{1.1}) to the first fundamental problem of
analytic functions, the Riemann-Hilbert problem for symmetric piece-wise
meromorphic functions invariant with respect to a Schottky group.
Next, we shall introduce a multiplicative canonical function and derive its 
representation in terms of a quasiautomorphic analogue of the Cauchy kernel 
(Theorem \ref{t3.1}). It turns out that the use of a quasimultiplicative
analogue of the Cauchy kernel for the solution of both the homogeneous and
inhomogeneous problems
allows to bypass  
the Jacobi inversion problem.
Such a kernel was derived in \cite{sil} for the first 
class groups (Burnside's classification \cite{bur}). Here (Theorem \ref{t4.1}), by using 
the Riemann-Roch theorem for multiplicative functions \cite{kra}, 
we shall prove the existence of such a kernel. Then we shall derive the general solution
to the homogeneous and inhomogeneous cases of the Hilbert problem and analyze
its solvability (Theorems \ref{t5.1} and \ref{t5.2}). We shall also specify the solution for the first
class groups. In addition, we shall solve the Hilbert problem (\ref{1.1})
with discontinuous coefficients in terms of an automorphic canonical function
and the solution to the associated real analogue of the Jacobi problem.
Motivated by applications in electromagnetics
 we shall present the solution in the special case
for piece-wise constant coefficients. Finally, we shall give an exact solution
to a circular $(N+1)$-connected plate with electrodes and dielectrics
on the walls when the applied electric and transverse magnetic fields   
cause the Hall effect. The solution will be  presented in a series form
for the first class Schottky groups.

\setcounter{equation}{0}

\section{Riemann-Hilbert problem with discontinuous coefficients
for piece-wise automorphic symmetric functions}

\begin{figure}[t]
\centerline{
\scalebox{0.6}{\includegraphics{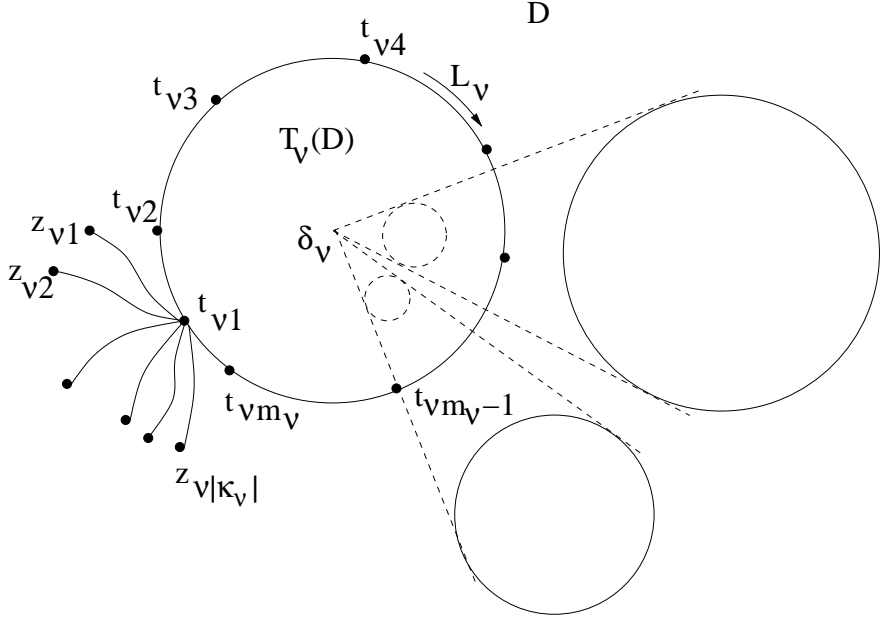}}
}
\caption{The geometry of the problem.}
\label{fig1}
\end{figure}

Let $D$ be an $(N+1)$-connected domain which is a complex 
$z$-plane with $N+1$ holes bounded by circles $L_\nu=\{z\in{\mathbb C}: |z-\Gd_\nu|=\Gr_\nu\}$,
$\nu=0,1,\ldots,N$ (Fig. 1). The positive direction on the circle $L_\nu$
is chosen such that the exterior of $L_\nu$ is on the left.
Define $L=\cup_{\nu=0}^N L_\nu$ and consider the following 
Hilbert problem:

\begin{problem}\label{p1} { Let
\beq
a(t)=a_\nu(t)\quad {\it and }\quad b(t)=b_\nu(t) \quad
(t\in L_\nu, \; \nu=0,1,\ldots,N)
\label{2.1}
\eeq
be real functions satisfying the H\"older condition either everywhere on 
$L_\nu$, or everywhere on $L_\nu$ except at points $t_{\nu 1}$, 
$t_{\nu 2}$, $\ldots$, $t_{\nu m_\nu}$, $\nu=0,1,\ldots,N,$
where at least one of the functions (\ref{2.1}) has a jump 
discontinuity. Assume that $a_\nu^2(t)+b_\nu^2(t)\ne 0$ everywhere
on all the contours $L_\nu$. Let $c(t)=c_\nu(t)$
($t\in L_\nu$) be a real H\"older-continuous function on  
$L_\nu$, $\nu=0,1,\ldots,N$.  

Find all functions $\Gf(z)=u(z)+iv(z)$, holomorphic in $D$, H\"older-continuous
everywhere in $D\cup L$ except at the points $t_{\nu j}$
($j=1,2,\ldots, m_\nu$, $\nu=0,1,\ldots,N$),
where they may have integrable singularities, bounded at infinity and
satisfying the boundary condition
\beq
a(t)u(t)+b(t)v(t)=c(t), \quad t\in L\setminus{\GT},
\label{2.2}
\eeq
where $\GT=\cup_{\nu=0}^N\cup_{j=1}^{m_\nu}t_{\nu j}$.}
\end{problem}

To solve this problem, we transform it into a Riemann-Hilbert 
problem for piece-wise automorphic functions.  
For this, we consider 
the symmetry group, $\frak G$, of the line $L=L_0\cup L_1\cup\ldots\cup L_N$
generated by the linear transformations
$\Gs_\nu(z)=T_\nu T(z)$, $\nu=1,2,\ldots,N$, where 
\beq
T=T_0, \quad T_\nu(z)=\Gd_\nu+\fr{\Gr_\nu^2}{\bar z-\bar\Gd_\nu}, \quad \nu=0,1,\ldots, N,
\label{2.2a}
\eeq
and $T_\nu$ is the symmetry transformation with respect to the circle $L_\nu$.
Denote the fundamental region of the group $\frak G$ by $\frak F=D\cup T(D)\cup L$.
The group $\frak G$ is a symmetry Schottky group \cite{for}. The elements of the 
group are
the identical map $\Gs_0(z)=z$ and all possible
compositions of the generators $\Gs_\nu=T_\nu T$ and the inverse maps
$\Gs_\nu^{-1}=TT_\nu$ ($\nu=1,2,\ldots,N$):
$$
\Gs\in\frak G \Leftrightarrow\Gs=T_{k_{2\mu}} T_{k_{2\mu-1}}\ldots T_{k_{2}}T_{k_{1}},\quad \mu=1,2,\ldots,
$$
\beq
k_1,k_2,\ldots,k_{2\mu}=0,1,\ldots,N, \quad
k_2\ne k_1, k_3\ne k_2, \ldots, k_{2\mu}\ne k_{2\mu-1}.
\label{2.2b}
\eeq
The region $\frak D=\cup_{\Gs\in\frak G}\Gs(\frak F)$ is invariant with respect to the group
$\frak G$: $\Gs(\frak D)=\frak D$ for all $\Gs\in \frak G$. This region is symmetric with
respect to all the circles $L_\nu$ ($\nu=0,1,\ldots,N$), and
$\frak D=\bar{\mathbb C}\setminus \GL$, where $\bar{\mathbb C}={\mathbb C}\cup\{\infty\}$ is
the extended $z$-plane,
and $\GL$ is the set of the limit points of the group $\frak G$ (it consists of
two points if $N=1$ and it is infinite if $N\ge 2$).
Notice that all elements of 
the group $\frak G$ can be represented in the form
\beq
\Gs(z)=\fr{a_\Gs z+b_\Gs}{c_\Gs z+d_\Gs},\quad a_\Gs d_\Gs-b_\Gs c_\Gs\ne 0,
\label{2.2c}
\eeq
and $c_\Gs\ne 0$ if $\Gs\ne \Gs_0$.

Introduce now a new function
\beq
\GF(z)=\left\{
\begin{array}{cc}
\Gf(z), & z\in D,\\
\ov{\Gf(T(z))}, & z\in T(D).\\
\end{array}\right.
\label{2.3}
\eeq
Extend next the definition of the function $\GF(z)$ from the domain $D\cup T(D)$ 
into the region $\frak D$ by the automorphicity law, 
\beq
\GF(z)=\GF(\Gs^{-1}(z)), \quad z\in\Gs(D\cup T(D)), \quad \Gs\in \frak G\setminus\Gs_0.
\label{2.4}
\eeq
The function $\GF(z)$ so defined is a 
piece-wise meromorphic function with the discontinuity line
$\frak L=\cup_{\Gs\in\frak G}\Gs(L)$
invariant with respect to the group $\frak G$:
\beq
\GF(\Gs(z))=\GF(z), \quad z\in\frak D\setminus\frak L,\quad \Gs\in\frak G.
\label{2.4'}
\eeq
In addition, the function $\GF(z)$ satisfies the symmetry condition
\beq
\ov{\GF(T_\nu(z))}=\GF(z), \quad z\in\frak D\setminus\frak L, \quad \nu=0,1,\ldots,N,
\label{2.5}
\eeq
which follows from (\ref{2.3}) and (\ref{2.4}).
In order to write the boundary condition (\ref{2.2}) in terms of the function $\GF(z)$,
let $z\to t\in L_\nu$, $z\in D$. Then $T_\nu(z)\to t$, $T_\nu(z)\in T_\nu(D)$.
Introduce the following notations
$$
\GF^+(t)=\lim_{z\to t, \; z\in D}\GF(z)=\Gf(t),
$$
\beq
\GF^-(t)=\lim_{z\to t, \; z\in T_\nu(D)}\GF(z)=\lim_{T_\nu(z)\to t, \; T_\nu(z)\in D}
\ov{\GF(T_\nu(z))}=\ov{\Gf(t)}.
\label{2.6}
\eeq
Now inspection of the boundary condition (\ref{2.2}) shows that it is essentially equivalent to the equation
\beq
\R\{[a(t)-ib(t)]\Gf(t)\}=c(t), \quad t\in L\setminus\GT,
\label{2.7}
\eeq
or, equivalently, in terms of the functions (\ref{2.6}),
\beq
\GF^+(t)=p(t)\GF^-(t)+q(t), \quad t\in L\setminus\GT,
\label{2.8}
\eeq
where
\beq
p(t)=-\fr{a(t)+ib(t)}{a(t)-ib(t)},
\quad q(t)=\fr{2c(t)}{a(t)-ib(t)}.
\label{2.9}
\eeq


\begin{definition}\label{d1}

We say that a function $\GF(z)\in Q_\frak G(\frak L)$ if it is piece-wise
meromorphic with the discontinuity line $\frak L$, invariant with respect to the group $\frak G$:
$\GF(\Gs(z))=\GF(z)$, $\Gs\in\frak G$, $z\in\frak D\setminus\frak L$, and $T$-symmetric:
$\ov{\GF(T(z))}=\GF(z)$, $z\in\frak D\setminus\frak L$.      
\end{definition}

The fact that the boundary values of the function $\GF(z)$ satisfy the condition (\ref{2.8})
lends itself to the opportunity of stating the following Riemann-Hilbert boundary-value problem
with discontinuous coefficients in the class of functions $Q_\frak G(\frak L)$:


\begin{problem}\label{p2} { Find all functions $\GF(z)\in Q_\frak G(\frak L)$, H\"older-continuous
in the domain $\frak D\cup\frak L$ apart from the set of points $\Gs(\GT)$, $\Gs\in\frak G$,
where they may have integrable singularities, bounded at  the points $\Gs(\infty)$ and
which satisfy the boundary condition (\ref{2.8}).}
\end{problem}

\setcounter{equation}{0}

\section{Multiplicative canonical function}

Let $t_{\nu 1}$ be the starting point of the circle $L_{\nu}$. This point can be 
chosen arbitrarily if both the functions $a(t)$ and $b(t)$ are continuous
everywhere on the circle $L_\nu$. 

\begin{definition}\label{d2} We say that a function $\Gc(z)$ is a  
{\it multiplicative canonical function} of Problem \ref{p2} if 

(i) it is a piece-wise meromorphic function in the region $\frak D$ with the discontinuity line $\frak L$,
H\"older-continuous in the domain $\frak D\cup\frak L$ except for the points
$\Gs(t_{\nu 1})$, where it may have a power singularity of any finite exponent,
and the points $\Gs(t_{\nu 2})$, $\Gs(t_{\nu 3})$,
$\ldots, \Gs(t_{\nu m_{\nu}})$, $\Gs\in\frak G$
where it may have integrable singularities,

(ii) its boundary values $\Gc^\pm(t)$ satisfy the boundary
condition 
\beq
\Gc^+(t)=p(t)\Gc^-(t), \quad t\in L\setminus\GT,
\label{3.0}
\eeq

(iii) it is a $T$-symmetric function: $\Gc(z)=\ov{\Gc(T(z))}$, $z\in \frak D\setminus\frak L$, and

(iv) it satisfies the multiplicativity condition
$\Gc(\Gs(z))=H_\Gs^{-1}\Gc(z)$, $\Gs\in\frak G$, $z\in\frak D\setminus\frak L$,
with the character $H^{-1}$,  where $H$ is a group homomorphism
between $\frak G$ and a multiplicative group $\frak H$ of complex numbers such that
$H_{\Gs\Go}=H_\Gs H_\Go$.

\end{definition}

To find such a function, we will use a quasiautomorphic analogue
of the Cauchy kernel. Prove first its existence.  

\begin{theorem}\label{t3.0} There exists a function
$K(z,\tau)$ which has the following properties:

(i) for each fixed $\tau\in L$, $K(z,\tau)=\fr{1}{\tau-z}+B(z,\tau)$,
where $B(z,\tau)$ is an analytic function of $z\in\frak F$,

(ii) there exists a point $z_*\in\frak F$, such that $K(z_*,\tau)=0$ for all $\tau\in L$,

(iii) for any $\Gs\in\frak G$,
\beq
K(\Gs(z),\tau)=K(z,\tau)+\Gn_\Gs(\tau),
\label{3.1}
\eeq
where $\Gn_\Gs(\tau)=K(\Gs(z_*),\tau)$.

\end{theorem}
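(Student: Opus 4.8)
The plan is to obtain $K(z,\tau)$ by integrating a normalized abelian differential of the second kind on the Riemann surface uniformized by $\frak G$. Since $\frak G$ is a Schottky group with $N$ generators acting freely and properly discontinuously on $\frak D=\bar{\mathbb C}\setminus\GL$, the quotient $R=\frak D/\frak G$ is a closed Riemann surface of genus $N$, and $\frak F$ is a planar fundamental region for it, bounded by $2N$ circles paired by the generators $\Gs_1,\dots,\Gs_N$. A point $\tau\in L$ lies in the interior of $\frak F$ and projects to a single point $P_\tau\in R$. I fix, once and for all, a base point $z_*\in\frak F$ (the same for every $\tau$) and a canonical homology basis $a_1,\dots,a_N,b_1,\dots,b_N$ of $R$ in which the cycles $a_\nu$ are represented by the boundary circles $C_\nu$ of $\frak F$.

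The core of the proof is the construction of a $\frak G$-invariant meromorphic differential whose only singularity in $\frak F$ is the required double pole. By the classical existence theorem for meromorphic differentials on a compact Riemann surface there is a differential $\Omega_\tau$ of the second kind on $R$ with a single double pole at $P_\tau$, having principal part $dz/(\tau-z)^2$ in the coordinate $z$ and zero residue there, holomorphic elsewhere. No residue-balance condition is needed because a double pole carries no residue; this is precisely why $\Omega_\tau$ exists for \emph{every} Schottky group, and not only for the first class groups for which an explicit Poincar\'e series is available. I then add to $\Omega_\tau$ the unique holomorphic differential that makes $\oint_{a_\nu}\Omega_\tau=0$ for $\nu=1,\dots,N$; this normalization exists and is unique because the matrix of $a$-periods of the $N$ holomorphic differentials on $R$ is nonsingular. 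Lifting $\Omega_\tau$ to $\frak D$ yields a $\frak G$-invariant differential, holomorphic off the orbit $\{\Gs(\tau):\Gs\in\frak G\}$ and with only the prescribed double pole inside $\frak F$.

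I then set $K(z,\tau)=\int_{z_*}^{z}\Omega_\tau$, the path taken in $\frak F$. The point that makes this single-valued is that the $\frak G$-invariance of $\Omega_\tau$ forces its periods over the paired boundary circles to be opposite, $\oint_{\Gs_\nu(C_\nu)}\Omega_\tau=-\oint_{C_\nu}\Omega_\tau$, so that among the $2N$ boundary periods of $\frak F$ only the $N$ numbers $\oint_{C_\nu}\Omega_\tau$ are independent; killing exactly these $N$ $a$-periods therefore renders the integral path-independent in $\frak F$. Property (ii) is then immediate, $K(z_*,\tau)=0$ for every $\tau$. For property (i) I integrate the local principal part, $\int dz/(\tau-z)^2=1/(\tau-z)$, and since $\Omega_\tau$ has no other singularity in $\frak F$ the difference $B(z,\tau)=K(z,\tau)-1/(\tau-z)$ extends analytically across $\tau$ and is analytic throughout $\frak F$. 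For property (iii), invariance gives $K(\Gs(z),\tau)-K(z,\tau)=\int_{z}^{\Gs(z)}\Omega_\tau$, and replacing $z$ by any other point leaves this integral unchanged; evaluating at $z=z_*$ identifies the constant as $\Gn_\Gs(\tau)=K(\Gs(z_*),\tau)$, which proves (\ref{3.1}).

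The main obstacle, and the only part carrying genuine content, is the construction and normalization of $\Omega_\tau$: guaranteeing that a second-kind differential with a single prescribed double pole exists on every Schottky quotient, and verifying that $\frak G$-invariance collapses the $2N-1$ independent periods of the multiply connected region $\frak F$ down to the $N$ $a$-periods, so that their vanishing alone yields a single-valued $K(\cdot,\tau)$ while leaving the defects $\Gn_\Gs(\tau)$ free to be nonzero. One must also check that no spurious poles are introduced at the images $\Gs(\tau)$ inside $\frak F$, and that $K(z,\tau)$ depends analytically on $\tau$ (and H\"older-continuously for $\tau\in L$), as required for its later use as a Cauchy-type kernel along $L$.
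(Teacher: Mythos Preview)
Your proposal is correct and follows essentially the same approach as the paper: both construct $K(z,\tau)$ as a normalized abelian integral of the second kind on the genus-$N$ Riemann surface obtained by identifying the paired boundary circles of $\frak F$, with a single simple pole of residue $-1$ at $z=\tau$, vanishing $a$-periods, and the normalization $K(z_*,\tau)=0$. The paper's proof is terser---it simply invokes the classical existence and uniqueness theorem for such an integral (citing Hurvitz--Courant)---whereas you spell out the construction via the differential $\Omega_\tau$ and the verification of (i)--(iii), but the content is the same.
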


\begin{proof}
The existence of such a function for any discrete discontinuous group
of M\"obius transformations and, in particular, for a Schottky symmetry group,
follows from the theory of abelian integrals on closed Riemann surfaces \cite{hur}.
Indeed, the fundamental region $\frak F$ becomes a closed Riemann surface of genus $N$
if we add the circles $L_\nu'=\Gs_\nu^{-1}(L_\nu)$ and consider
all congruent points of the circles $L_\nu$ and $L_\nu'$ ($\nu=1,2,\ldots,N$)
as identical. The cycles $L_\nu$ may be accepted as canonical cross-sections $\Ba_\nu$,
and any simple curve joining a pair of congruent points $\Gx_\nu'\in L_\nu'$ and 
$\Gs_\nu(\Gx_\nu')\in L_\nu$ can be considered as a canonical cross-section $\Bb_\nu$. As a function
of $z$, the kernel $K(z,\tau)$ is an abelian integral of the second kind.  It has only one
simple pole at the point $z=\tau$ with the residue $-1$. It vanishes at the point $z=z_*$ 
and has zero $A$-periods,
$
\oint_{a_\nu}d_z K(z,\tau)=0.   
$ 
It is known \cite{hur} that such an abelian integral exists and it is unique.
This integral has non-zero $B$-periods, 
\beq
\oint\limits_{b_\nu}d_zK(z,\tau)=K(\Gs_\nu(\Gx'_\nu),\tau)-K(\Gx_\nu',\tau)
=\eta_\nu(\tau),
\quad \nu=1,2,\ldots,N,
\label{3.3}
\eeq
where $\Gn_\nu(\tau)=K(\Gs_\nu(z_*),\tau)$, and the maps $\Gs_\nu=T_\nu T$ are generating transformations of the group $\frak G$.
\end{proof}
 
Note that the functions $\eta_\nu(\tau)$ ($\nu=1,2,\ldots, N$) are linearly independent, and the differentials
$\fr{1}{2\pi i}\eta_\nu(\tau)d\tau$ ($\nu=1,2,\ldots, N$) form the normalized basis of abelian differentials
of the fist kind on the Riemann surface,
$$
\fr{1}{2\pi i}\int\limits_{L_\nu}\eta_k(\tau)d\tau =\left\{\begin{array}{cc}
1, & k = \nu,\\
0, & k\ne \nu,\\
\end{array}
\right.
$$
\beq
\fr{1}{2\pi i}\int\limits_{T(t_{\nu 1})}^{t_{\nu 1}} \eta_k(\tau)d\tau= B_{k\nu}.
\label{3.4}
\eeq
The matrix of  $B$-periods, $||B_{k\nu}||$, $k,\nu=1,2,\ldots, N$, is symmetric and its imaginary part
is positive definite.

\begin{definition}\label{d3} A function  $K(z,\Gx)$ is said to be
a {\it quasiautomorphic analogue}
of the Cauchy kernel if it possesses properties (i) to (iii) of Theorem \ref{t3.0}.
\end{definition}

\begin{remark}\label{r1}

Because of the property (i), the integral
\beq
\fr{1}{2\pi i}\int\limits_L\Gvf(\tau)K(z,\tau)d\tau
\label{3.2}
\eeq
satisfies the Sokhotski-Plemelj formulas. 

\end{remark}

If the group $\frak G$ is of the first class \cite{bur} 
or, equivalently, the numerical series
\beq
\sum_{\Gs\in\frak G\setminus\Gs_0}\fr{|a_\Gs d_\Gs-b_\Gs c_\Gs|}{|c_\Gs|^2}
\label{3.5}
\eeq
is convergent, such a kernel is known \cite{chi}, 
\beq
K(z,\tau)=\sum_{\Gs\in \frak G}\left(
\fr{1}{\Gs(\tau)-z}-\fr{1}{\Gs(\tau)-z_*}
\right)\Gs'(\tau)=
\sum_{\Gs\in\frak G}\left(
\fr{1}{\tau-\Gs(z)}-\fr{1}{\tau-\Gs(z_*)}\right).
\label{3.6}
\eeq
In general, the kernel can be expressed through the Schottky-Klein prime function
$\Go(z,\tau)$ associated with the group $\frak G$ \cite{antcro} by the formula
\beq
K(z,\tau)=\fr{d}{d\tau}\ln\left(\fr{\Go(z,\tau)}{\Go(z_*,\tau)}\right).
\label{3.7}
\eeq

Define next the logarithmic function $\ln p(\tau)$ or, equivalently, $\arg p(\tau)$ on each arc $t_{\nu j} t_{\nu j+1}$,
$\nu=0,1,\ldots,N$, $j=1,2,\ldots m_\nu$. We shall use the definitions $t_{\nu j}^+$ and $t_{\nu j+1}^-$
to indicate the starting and the terminal points of the arc $t_{\nu j}t_{\nu j+1}$, respectively  
(it is assumed that $t_{\nu m_\nu+1}=t_{\nu 1}$).
On the arc $t_{\nu 1} t_{\nu 2}$, a branch of the function $\arg p(\tau)$
can be fixed arbitrarily. We fix it by the condition 
\beq
-\pi<\arg p(t_{\nu 1}^+)\le\pi.
\label{3.8}
\eeq
Let $\GD_{\nu j}$ be the change of $\arg p(\tau)$ along the
arc  $t_{\nu j} t_{\nu j+1}$ ($j=1,2,\ldots, m_\nu$),
\beq
\GD_{\nu j}=[\arg p(\tau)]_{t_{\nu j}t_{\nu j+1}}. 
\label{3.9}
\eeq
Then, obviously,
$\arg p(t_{\nu j+1}^-)=\arg p(t_{\nu j}^+)+\GD_{\nu j}$, $j=1,2,\ldots,m_\nu$. The values 
$\arg p(t_{\nu j}^+)$ ($j=2,3,\ldots,m_\nu$) cannot be chosen arbitrarily. Since the solution $\GF(z)$ may have integrable singularities
at the points  $t_{\nu j}$, define a continuous branch of the function $\arg p(\tau)$ by 
\beq
-2\pi<\arg p(t_{\nu j}^-)-\arg p(t_{\nu j}^+)\le 0, \quad j=2,\ldots,m_\nu.
\label{3.10}
\eeq   
We next choose integers $\Gk_\nu$ such that
\beq
-4\pi<\arg p(t_{\nu 1}^-)-\arg p(t_{\nu 1}^+)-4\pi\Gk_\nu\le 0, \quad \nu=0,1,\ldots,N,
\label{3.11}
\eeq
and prove the following result.

\begin{theorem}\label{t3.1} Let 
\beq
\GG(z)=\fr{1}{4\pi}\int\limits_L
\arg p(\tau) K(z,\tau)d\tau +\sum_{\nu=0}^N\sgn\Gk_\nu\sum_{j=1}^{|\Gk_\nu|}\int\limits_{\Gg_{\nu j}}K(z,\tau)d\tau,  
\label{3.12}
\eeq
where $\Gg_{\nu j}=t_{\nu 1}z_{\nu j}$
are piece-wise smooth curves in $D$ which do not cross each other, and
 $z_{\nu j}$ are arbitrarily fixed distinct points in the region $D$ (Fig. 1).
Then 
\beq
\Gc(z)=\exp\{\GG(z)+\ov{\GG(T(z))}\}
\label{3.14}
\eeq
is a multiplicative canonical function.
\end{theorem}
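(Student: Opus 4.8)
The plan is to verify the four defining properties (i)--(iv) of Definition \ref{d2} for $\Gc(z)=\exp\{\GG(z)+\ov{\GG(T(z))}\}$. Property (iii), the $T$-symmetry, is immediate: since $T=T_0$ is an anticonformal involution, $T^2=\mathrm{id}$, so
\[
\ov{\Gc(T(z))}=\exp\{\ov{\GG(T(z))}+\GG(T^2(z))\}=\exp\{\GG(z)+\ov{\GG(T(z))}\}=\Gc(z).
\]
The common engine for the remaining properties is the quasiautomorphic shift of Theorem \ref{t3.0}(iii). Substituting $K(\Gs(z),\tau)=K(z,\tau)+\Gn_\Gs(\tau)$ into \eqref{3.12} and using that $\Gn_\Gs(\tau)$ is independent of $z$ gives $\GG(\Gs(z))=\GG(z)+C_\Gs$ with the constant
\[
C_\Gs=\frac{1}{4\pi}\int_L\arg p(\tau)\Gn_\Gs(\tau)\,d\tau+\sum_{\nu=0}^N\sgn\Gk_\nu\sum_{j=1}^{|\Gk_\nu|}\int_{\Gg_{\nu j}}\Gn_\Gs(\tau)\,d\tau .
\]

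For the multiplicativity (iv) I would first record that the shifts form an additive cocycle, $\Gn_{\Gs\Go}=\Gn_\Gs+\Gn_\Go$ (apply Theorem \ref{t3.0}(iii) twice), whence $C_{\Gs\Go}=C_\Gs+C_\Go$. To handle the conjugate term I use that $T\Gs T=T_0\Gs T_0$ is an even word in the reflections $T_0,\dots,T_N$ and hence lies in $\frak G$; since $T\Gs=(T\Gs T)T$, this gives $\ov{\GG(T\Gs(z))}=\ov{\GG(T(z))}+\ov{C_{T\Gs T}}$. Combining the two computations yields $\Gc(\Gs(z))=H_\Gs^{-1}\Gc(z)$ with
\[
H_\Gs^{-1}=\exp\{C_\Gs+\ov{C_{T\Gs T}}\}.
\]
The homomorphism property $H_{\Gs\Go}=H_\Gs H_\Go$ then follows from the additivity of $C$ together with $T\Gs\Go T=(T\Gs T)(T\Go T)$.

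The structural core is the jump relation (ii). Fix $t\in L_\nu$ with $\nu\ge 1$ (the case $\nu=0$ is the direct specialisation, since $T_0$ fixes $L_0$ pointwise); the $+$ and $-$ limits of $\Gc$ are taken from $D$ and from $T_\nu(D)$. The identities $T_0(t)=\Gs_\nu^{-1}(t)$ and $\Gs_\nu T_0=T_\nu$, both consequences of $\Gs_\nu=T_\nu T_0$ and $T_\nu^2=T_0^2=\mathrm{id}$, allow me to rewrite the limit of $\GG(T_0(z))$ as an \emph{opposite}-side boundary value of $\GG$ on $L_\nu$, shifted by $-C_{\Gs_\nu}$. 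A short computation then gives
\[
\frac{\Gc^+(t)}{\Gc^-(t)}=\exp\{J-\ov{J}\},\qquad J:=\GG^+(t)-\GG^-(t),
\]
the constants $C_{\Gs_\nu}$ cancelling. By Remark \ref{r1} (the Sokhotski--Plemelj formulas for $K$) and $\ln p=i\arg p$, the jump of $\GG$ across $L_\nu$ is $J=\tfrac12\ln p(t)$; here the normalising factor $\tfrac1{4\pi}$ in \eqref{3.12} is exactly what produces one half of $\ln p$, the second half coming from the conjugate term. Since $|p(t)|=1$, $\ln p$ is purely imaginary, so $J-\ov J=2J=\ln p(t)$ and hence $\Gc^+(t)=p(t)\Gc^-(t)$ on $L\setminus\GT$; the arcs $\Gg_{\nu j}\subset D$ add only integer multiples of $2\pi i$ to $\GG$ and so are invisible under $\exp$ on $L$ away from $t_{\nu 1}$.

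The most delicate part, and the step I expect to be the main obstacle, is the local behaviour at the discontinuity points in property (i). Near a jump point $t_{\nu j}$ the Cauchy-type integral contributes a logarithmic term with coefficient proportional to $\arg p(t_{\nu j}^-)-\arg p(t_{\nu j}^+)$; crucially, $\ov{\GG(T_0(z))}$ is singular there as well, because $T_0(t_{\nu j})=\Gs_\nu^{-1}(t_{\nu j})$ is again a singular point of the automorphically continued $\GG$, and by the anticonformality of $T_0$ it contributes an equal logarithmic term. Adding the two gives $\Gc\sim(z-t_{\nu j})^{(\arg p(t_{\nu j}^-)-\arg p(t_{\nu j}^+))/2\pi}$, whose exponent lies in $(-1,0]$ by the branch normalisation \eqref{3.10}, so the singularity is integrable. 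At the starting point $t_{\nu 1}$ the same analysis, now incorporating the $|\Gk_\nu|$ arcs $\Gg_{\nu j}$ emanating from $t_{\nu 1}$, shows that the integer $\Gk_\nu$ fixed in \eqref{3.11} simultaneously renders $\Gc$ single-valued and fixes the exponent of the admissible power singularity at $t_{\nu 1}$ to a controlled finite value, while \eqref{3.8} fixes the reference branch. Verifying these asymptotics rigorously, together with the meromorphy of $\Gc$ in $\frak D$ (simple zeros or poles at the endpoints $z_{\nu j}$ of the arcs) and boundedness at the points $\Gs(\infty)$, completes the proof.
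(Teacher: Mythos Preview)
Your argument is correct and follows essentially the same route as the paper: verify (iii) by the involution $T^2=\mathrm{id}$, obtain (iv) from the additive shift $\GG(\Gs(z))=\GG(z)+C_\Gs$ of the quasiautomorphic kernel together with $T\Gs T\in\frak G$, derive (ii) by the Sokhotski--Plemelj jump $\GG^+-\GG^-=\tfrac{i}{2}\arg p$ combined with the conjugate term (the paper writes this as \eqref{3.21}--\eqref{3.23}), and read off the power behaviour at the $t_{\nu j}$ and the simple zeros/poles at the $z_{\nu j}$ from the endpoint logarithms of the integrals. Two small points of wording: the integer $\Gk_\nu$ does not ``render $\Gc$ single-valued'' (it is already single-valued under $\exp$); its role is solely to fix the exponent $2\Ga_{\nu 1}$ at $t_{\nu 1}$ via \eqref{3.11}, and boundedness at $\Gs(\infty)$ is not part of Definition~\ref{d2} and need not be checked here.
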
 

\begin{proof}
 Analyze first the behavior of the function  $\Gc(z)$
at the points of the set $\GT$. Clearly, in a neighborhood of the point
$z=t_{\nu j}$,
\beq
\GG(z)=\Ga_{\nu j}\ln(z-t_{\nu j})+ f_0(z), \quad j=1,2,\ldots,m_\nu, \quad \nu=0,1,\ldots,N,
\label{3.15}
\eeq
where $f_0(z)$ is a function bounded as $z\to t_{\nu j}$,
$$
\Ga_{\nu 1}=\fr{1}{4\pi}[\arg p(t_{\nu 1}^-)-\arg p(t_{\nu 1}^+)]-\Gk_{\nu},
$$
\beq
\Ga_{\nu j}=\fr{1}{4\pi}[\arg p(t_{\nu j}^-)-\arg p(t_{\nu j}^+)],\quad j=2,3,\ldots,m_\nu.
\label{3.16}
\eeq
Since $T_\nu(t_{\nu j})=t_{\nu j}$ we conclude from (\ref{2.2a}) that
\beq
T_\nu(z)-t_{\nu j}\sim -\fr{\Gr_\nu^2}{(\bar t_{\nu j}-\bar\Gd_\nu)^2}(\bar z-\bar t_{\nu j}),
\quad z\to t_{\nu j}.
\label{3.17}
\eeq
On the other hand,
$$
\GG(T(z))=\GG(\Gs_\nu^{-1}(T_\nu(z)))=\GG(T_\nu(z))-\GG(\Gs_\nu(z_*))
$$
\beq
=\Ga_{\nu j}\ln(T_\nu(z)-t_{\nu j})+f_1(z), \quad
z\to t_{\nu j},
\label{3.18}
\eeq
where $f_1(z)$ is a function bounded as $z\to t_{\nu j}$.
From the definition of the function $\Gc(z)$ (\ref{3.14}) and (\ref{3.12}), it follows that
$$
\Gc(z)\sim A_{\nu j}(z-t_{\nu j})^{2\Ga_{\nu j}},\quad
A_{\nu j}=\const\ne 0,  \quad z\to t_{\nu j},
$$
\beq
j=1,2,\ldots,m_\nu,\quad
\nu=0,1,\ldots,N.
\label{3.19}
\eeq
Since $-\fr12<\Ga_{\nu j}\le 0$ ($j\ne 1$), the function
$\Gc(z)$ may have at most an integrable singularity as $z\to t_{\nu j}$  ($j\ne 1$).
At the point $z=t_{\nu 1}$, the function $\Gc(z)$ has an integrable singularity
if $-\fr12<\Ga_{\nu 1}\le 0$ and a nonintegrable singularity of order $1\le -2\Ga_{\nu 1}<2$
if $-1<\Ga_{\nu 1}\le -\fr12$. 

Analysis of the second term in (\ref{3.12}) implies that if $\Gk_\nu\ne 0$, then at the points $z_{\nu j}$,
the function $\Gc(z)$ 
has a simple pole provided $\Gk_\nu$ is negative and a simple zero provided
$\Gk_\nu$ is positive. Apart from these points, the function $\Gc(z)$ 
is analytic 
everywhere in the region $D$ and does not vanish. 
In the case $\Gk_\nu=0$, $z_{\nu j}$ are regular points of the function $\Gc(z)$.

Verify now that the boundary values, $\Gc^+(t)$ and $\Gc^-(t)$,
of the function $\Gc(z)$ satisfy the linear relation (\ref{3.0}). By applying the Sokhotski-Plemelj
formulas to the integral
\beq
\GG_0(z)=\fr{1}{4\pi i}\int\limits_L \ln p(\tau)K(z,\tau)d\tau,
\label{3.20}
\eeq
and noticing that $|p(\tau|=1$, we obtain 
\beq
\GG_0^+(t)-\GG_0^-(t)=\fr{i}{2}\arg p(t), \quad t\in L\setminus\GT.
\label{3.21}
\eeq
Consider now $\GG_0^\pm(T(t))$, $t\in L$.
Let first $t\in L_0$ and $z\to t^\pm$. Clearly, then $T(z)\to t^\mp$ and
$\GG_0^\pm(T(t))=\GG_0^\mp(t)$. This implies
\beq
\GG_0^+(T(t))-\GG_0^-(T(t))=-\fr{i}{2}\arg p(t).
\label{3.22}
\eeq
For $t\in L_\nu$ ($\nu=1,2,\ldots,N$), because of the identity
\beq
\GG_0(T(z))=\GG_0(T_\nu(z))-\GG_0(\Gs_\nu(z_*)),
\label{3.23}
\eeq
the one-sided limits $\GG_0^\pm(T_0(t))$ of the function $\GG_0(T(z))$ 
meet the condition (\ref{3.22}). Therefore, the jump of the
function $\GG_0(z)+\ov{\GG_0(T(z))}$, when $z$ passes through the line $L$,
equals $i\arg p(t)=\ln p(t)$. 

Notice that the function $\GG(z)$ is discontinuous when $z$ passes
through the curves  $t_{\nu 1}z_{\nu j}$, and the jump is a multiple of $2\pi i$.
This means that the function $\Gc(z)$ itself is continuous through these curves.

We observe next that the function $\Gc(z)$ is a $T$-symmetric function: $\ov{\Gc(T(z))}=\Gc(z)$,
$z\in \frak D\setminus \frak L$. To finalize the proof of the theorem, we need to show that
$\Gc(z)$ is a multiplicative function. 
The property (\ref{3.1}) of the kernel $K(z,\tau)$ written for the 
generating transformations $\Gs_\nu(z)$ implies 
$$
\GG(\Gs_\nu(z))=\GG(z)+h_\nu,
$$
\beq
\GG(T(\Gs_\nu(z))=\GG(TT_\nu T(z))=\GG(\Gs_\nu^{-1}T(z))=\GG(T(z))-h_\nu,
\label{3.24}
\eeq
where $h_\nu=\GG(\Gs_\nu(z_*))$, $\nu=1,2\ldots,N.$ Therefore,
$$
\Gc(\Gs_\nu(z))=H_\nu^{-1}\Gc(z), \quad z\in\frak D\setminus\frak L, 
$$
\beq
H_\nu=\exp(-2i\I h_\nu), \quad \nu=1,2,\ldots, N.
\label{3.25}
\eeq
Consider now the general form of the transformation $\Gs=T_{\nu_{2\mu}}T_{\nu_{2\mu-1}}
\ldots T_{\nu_{2}}T_{\nu_{1}}$. It can also be written in the form
\beq
\Gs=\Gs_{\nu_{2\mu}}\Gs_{\nu_{2\mu-1}}^{-1}\ldots
\Gs_{\nu_{2}}\Gs_{\nu_{1}}^{-1}, \quad \mu=1,2,\ldots.
\label{3.26}
\eeq
Since
\beq
\Gc(\Gs^{-1}_\nu(z))=H_\nu\Gc(z), \quad z\in\frak D\setminus\frak L, 
\label{3.27}
\eeq
the property (\ref{3.25}) for the generating transformations is valid for any transformation
$\Gs\in\frak G$ provided the number $H_\nu$ is replaced by $H_\Gs$,
$$
\Gc(\Gs(z))=H_\Gs^{-1}\Gc(z), \quad z\in\frak D\setminus\frak L, 
$$
\beq
H_\Gs=\fr{H_{\nu_2}H_{\nu_4}\ldots H_{\nu_{2\mu}}}
{H_{\nu_1}H_{\nu_3}\ldots H_{\nu_{2\mu-1}}}, \quad H_{\Gs_0}=1.
\label{3.28}
\eeq
The one-to-one map (\ref{3.28}), $H$, from the group $\frak G$ into a multiplicative group
$\frak H$
of complex numbers $H_\Gs$, $\Gs\in\frak G$, has the following property 
\beq
H_{\Gs\Go}=H_\Gs H_\Go \quad \forall \Gs,\Go\in\frak G.
\label{3.29}
\eeq
Thus, $H$ is a homomorphism between these two groups, and $\Gc(z)$ is a multiplicative
function with the character $H^{-1}$ \cite{kra}.
\end{proof}

\setcounter{equation}{0}
\section{Quasimultiplicative analogue of the Cauchy kernel}

To solve the homogeneous case ($q(t)\equiv 0$) of Problem \ref{p2},
we need a  quasimultiplicative analogue of the Cauchy kernel.

\vspace{.1in}

\begin{theorem}\label{t4.1} There exists a function
$M(z,\tau)$ which has the following properties:

(i) for each fixed $\tau\in L$, $M(z,\tau)=\fr{1}{\tau-z}+B_0(z,\tau)$,
where $B_0(z,\tau)$ is an analytic function of $z\in\frak F$,

(ii) there exists a point $z_0\in\frak F$, such that $M(z_0,\tau)=0$ for all $\tau\in L$,

(iii) for each fixed $\tau\in L$ and for any $\Gs\in\frak G$, there exists a function $\Gz_\Gs(\tau)$ such that
\beq
M(\Gs(z),\tau)=H_\Gs M(z,\tau)+\Gz_\Gs(\tau).
\label{4.0}
\eeq
\end{theorem}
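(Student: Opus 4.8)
The plan is to mirror the proof of Theorem \ref{t3.0}, but to work in the category of multiplicative (Prym) differentials carrying the character $H$ of the canonical function rather than with ordinary abelian differentials. As in Theorem \ref{t3.0} I would pass to the closed Riemann surface $\Sigma$ of genus $N$ obtained from $\frak F$ by adjoining the circles $L_\nu'=\Gs_\nu^{-1}(L_\nu)$ and identifying congruent points, with $a_\nu=L_\nu$ and $b_\nu$ the dual cycles. The key observation is that if $M(z,\tau)$ satisfies (iii), then differentiation in $z$ gives $M'(\Gs(z))\Gs'(z)=H_\Gs M'(z)$, so that $\eta:=d_zM$ is a multiplicative differential with multiplier $H$, i.e.\ $\Gs^*\eta=H_\Gs\eta$; moreover (i) forces $\eta$ to be of the second kind with a single double pole at $z=\tau$, principal part $dz/(\tau-z)^2$ and zero residue. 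Conversely, given such an $\eta$ normalized so that its $a$-periods vanish, I define $M(z,\tau)=\int_{z_0}^z\eta$; then (i) and the normalization $M(z_0,\tau)=0$ (property (ii)) hold, and since $\int_{\Gs(z_0)}^{\Gs(z)}\eta=H_\Gs\int_{z_0}^z\eta$ by $\Gs^*\eta=H_\Gs\eta$, one obtains $M(\Gs(z),\tau)=H_\Gs M(z,\tau)+\Gz_\Gs(\tau)$ with $\Gz_\Gs(\tau)=M(\Gs(z_0),\tau)$, which is (iii). Thus the theorem reduces to producing a normalized multiplicative differential of the second kind with character $H$.

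For existence I would invoke the Riemann--Roch theorem for multiplicative functions \cite{kra}. Writing $L_H$ for the flat line bundle of degree $0$ attached to $H$, the multiplicative differentials of character $H$ with at worst a double pole at $\tau$ are the sections of $K_\Sigma\otimes L_H\otimes\mathcal O(2\tau)$, a bundle of degree $2N$. Since $L_H^{-1}(-2\tau)$ has negative degree it has no sections, so Riemann--Roch yields $\dim H^0(K_\Sigma\otimes L_H(2\tau))=N+1$, while the holomorphic multiplicative differentials form $H^0(K_\Sigma\otimes L_H)$, of dimension $N-1$ when $H$ is nontrivial (the automorphic case $H\equiv1$ degenerates to Theorem \ref{t3.0}, already covered there). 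Hence the map sending a differential to its principal part at $\tau$ in $\mathbb C^2$ has rank $(N+1)-(N-1)=2$ and is surjective, so there is a multiplicative differential $\eta_0$ of character $H$ whose principal part is exactly $dz/(\tau-z)^2$; in particular $\eta_0$ is automatically of the second kind.

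The main obstacle is the last step: arranging that the integral $M$ be single-valued on $\frak F$, i.e.\ that $\eta$ have vanishing $a$-periods, which is what makes $B_0(z,\tau)$ in (i) a genuine analytic function of $z\in\frak F$. Here the interplay between the zero-residue requirement and a nontrivial multiplier must be handled with care. Since $L_\nu=\Gs_\nu(L_\nu')$ and $\Gs_\nu^*\eta=H_{\Gs_\nu}\eta$, one has $\oint_{L_\nu}\eta=H_{\Gs_\nu}\oint_{L_\nu'}\eta$, so the $N$ conditions $\oint_{a_\nu}\eta=0$ already force $\oint_{L_\nu'}\eta=0$ and hence single-valuedness across all $2N$ boundary circles of $\frak F$. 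Integrating $\eta_0$ over $\partial\frak F$ and using that its only residue (at $\tau$) vanishes gives the identity $\sum_{\nu=1}^N(H_{\Gs_\nu}-1)\oint_{L_\nu'}\eta_0=0$, which confines the period vector to an $(N-1)$-dimensional hyperplane; the same identity holds for every holomorphic multiplicative differential. Because $H$ is unitary, $|H_\Gs|=1$ by (\ref{3.25}), the Riemann bilinear relations show that the $a$-period map on $H^0(K_\Sigma\otimes L_H)$ is injective, hence an isomorphism onto that hyperplane. Therefore a unique holomorphic multiplicative differential can be subtracted from $\eta_0$ to annihilate all $a$-periods, producing the required normalized $\eta$ and, by integration, the kernel $M(z,\tau)$. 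This argument also shows that $M$ is essentially unique, in parallel with the quasiautomorphic kernel of Theorem \ref{t3.0}.
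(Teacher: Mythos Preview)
Your proof is correct and follows the same overall architecture as the paper's: differentiate the putative kernel to obtain a multiplicative automorphic form of weight $(-2)$ with character $H$ (equivalently, a Prym differential on the Schottky double $\frak R$), invoke the Riemann--Roch theorem for multiplicative functions \cite{kra}, \cite{far} to produce such a form with the required principal part $dz/(\tau-z)^2$, and then recover $M(z,\tau)$ by integration from a base point $z_0$.

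The substantive difference lies in how the single-valuedness of $M$ on $\frak F$ (the vanishing of the $\Ba$-periods of $\eta$) is obtained. The paper asserts in (4.8)--(4.9) that $\int_{L_\nu}dP^\circ=(H[\Ba_\nu]-1)P^\circ(\Gx_\nu^+)=0$, which in effect treats the primitive $P^\circ$ as itself a multiplicative \emph{function} with $\Ba$-multiplier $H[\Ba_\nu]=1$; from this the paper concludes that the space of holomorphic multiplicative differentials with the given factors has dimension $N$. You instead compute that dimension to be $N-1$ for nontrivial $H$ (which is what Riemann--Roch gives, since $h^0(L_H^{-1})=0$), observe that the residue identity $\sum_\nu(H_{\Gs_\nu}-1)\oint_{L_\nu'}\eta=0$ confines the $\Ba$-period vectors of all such differentials to a fixed $(N{-}1)$-dimensional hyperplane, and then normalise explicitly by subtracting a holomorphic Prym differential. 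Your argument that the $\Ba$-period map is injective on $H^0(K_\Sigma\otimes L_H)$ for unitary $H$ (so that this subtraction is possible and unique) is the one step that deserves a line of justification; it follows because $|H_\Gs|=1$ makes $i\,\eta\wedge\bar\eta$ a genuine nonnegative density on $\frak R$, so a holomorphic Prym differential with zero $\Ba$-periods has zero $L^2$-norm via the bilinear relation on the cut surface. With that in place, your normalisation step is rigorous, and it also yields the uniqueness you note at the end.

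In short: same route, but your treatment of the $\Ba$-period normalisation is more careful and is consistent with the Riemann--Roch count for nontrivial characters, whereas the paper's shortcut at (4.8) tacitly assumes that the primitive of a multiplicative differential is again multiplicative, which is precisely the single-valuedness one is trying to establish.
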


\begin{definition}\label{4} A function  $M(z,\Gx)$ which possesses properties (i) to (iii) is said to be
a {\it quasimultiplicative analogue}
of the Cauchy kernel with the character $H$.
\end{definition}

\begin{proof} Introduce a function $P(z)=\fr{\Md M(z,\tau)}{\Md z}$. From the conditions
(i) and (iii) it follows that
\beq
P(z)=\fr{1}{(\tau-z)^2}+B_1(z,\tau) {\;\rm for\; each\; fixed\;} \tau\in L, 
\label{4.1}
\eeq
where $B_1(z,\tau)=\fr{\Md B_0(z,\tau)}{\Md z}$, and also
\beq
\Gs'(z)P(\Gs(z))=H_\Gs P(z), \quad \Gs\in\frak G,
\label{4.2}
\eeq
or, equivalently, $\Gs'_\nu(z)P(\Gs_\nu(z))=H_\nu P(z)$, $\nu=1,2,\ldots,N$.
This means that $P(z)$ is a multiplicative automorphic form of weight (dimension)
(-2) belonging to the character $H$ \cite{kra}. This form has only one
singularity in the fundamental region $\frak F$, a 
pole of the second order at the point $z=\tau$. At the infinite point, $z=\infty$,
it has a zero of the second order. In what follows we prove that such a form exists.

Let $\frak R$ be a Riemann surface formed by gluing the congruent sides
$L_\nu$ and $L_\nu'=\Gs^{-1}_\nu(L_\nu)$ of the fundamental region $\frak F$.
Choose the canonical cross-sections of the surface $\frak R$  (the canonical homology basis on $\frak R$)
as follows: $\Ba_\nu=L_\nu$ and $\Bb_\nu=\Gx_\nu'\Gx_\nu$ with
$\Gx_\nu'\in L'_\nu$ and $\Gx_\nu=\Gs_\nu(\Gx_\nu')\in L_\nu$.
On the surface $\frak R$, the differential $dP^\circ(z)=P(z)dz$ can be interpreted
as a multiplicative differential with the character $H$ defined by
\beq
H[\Ba_\nu]=1, \quad H[\Bb_\nu]=H_\nu, \quad \nu=1,2,\ldots,N.
\label{4.3}
\eeq
Show next that there exist exactly $N+1$ linearly independent multiplicative differentials
which, on the surface $\frak R$, have only one pole of multiplicity not higher than 2. Let
$r_*$ be the dimension of the space $\frak M_H(\frak d_*)$ of multiplicative differentials with the character $H$
whose divisors $\frak d_*$ are multiples
of the divisor $\frak d_0=\tau^{-2}$, and let $r$ be the dimension of the space $\frak M_{H^{-1}}(\frak d_*^{-1})$
of multiplicative functions 
with the character $H^{-1}$  whose divisors $\frak d_*^{-1}$
are multiples
of the divisor $\frak d_0^{-1}=\tau^{2}$. The character $H^{-1}$ is 
defined by the factors $H^{-1}[\Ba_\nu]=1$,  $H^{-1}[\Bb_\nu]=H^{-1}_\nu$,
$ \nu=1,2,\ldots,N$. 
The space $\frak M_{H^{-1}}(\frak d_*^{-1})$ consists of multiplicative functions which have
a second-order zero at the point $z=\tau$ and which do not have any singularities on $\frak R$. Clearly,
the dimension of this space, $r$, is zero.
By the Riemann-Roch theorem for multiplicative functions on a genus-$N$ Riemann surface $\frak R$ \cite{far}, p.126,
\beq
r_*=r-\deg \frak d_0+N-1.
\label{4.4}
\eeq
Since $r=0$ and $\deg\frak d_0=-2$ we find $r_*=N+1$. Similarly, the dimension of the space 
of multiplicative differentials whose divisors are multiples
of the divisor $\frak d_1=\tau^{-1}$, equals $N$. Note, however, that there does not exist 
a differential $dP^\circ$ with factors $H[\Ba_\nu]=1$ analytic everywhere on the surface $\frak R$
except at the point $z=\tau$, 
\beq
dP^\circ(z)\sim \fr{dz}{\tau-z}.
\label{4.5}
\eeq
Otherwise, on one hand,
\beq
\int\limits_{\Md \frak F}d P^\circ=-2\pi i.
\label{4.6}
\eeq
On the other hand, 
\beq
\int\limits_{\Md \frak F}d P^\circ=\sum_{\nu=1}^N\left(\int\limits_{L_\nu}dP^\circ
-\int\limits_{L_\nu'}dP^\circ
\right)=\sum_{\nu=1}^N(1-H^{-1}[\Bb_\nu])\int\limits_{L_\nu}dP^\circ=0
\label{4.7}
\eeq
since 
\beq
\int_{L_\nu}dP^\circ=P^\circ(\Gx_\nu^-)-P^\circ(\Gx_\nu^+)=(H[\Ba_\nu]-1)P^\circ(\Gx_\nu^+)=0,
\label{4.8}
\eeq
that is in contradiction with (\ref{4.6}). 
This means there exist $N$ linearly-independent multiplicative differentials with factors 
$H[\Ba_\nu]=1$ and  $H[\Bb_\nu]=H_\nu$,
$ \nu=1,2,\ldots,N$, analytic everywhere on the surface $\frak R$. Since $r_*=N+1$, there exists at least
one multiplicative differential $dP^\circ(z)$ with factors 
$H[\Ba_\nu]=1$ and  $H[\Bb_\nu]=H_\nu$ and which has a single second-order pole with the principal part
$\fr{dz}{(\tau-z)^2}$. Then the function 
$P(z)=\fr{dP^\circ(z)}{dz}$ satisfies the conditions    
(\ref{4.1}) and (\ref{4.2}), has a second-order zero at the infinite point $z=\infty$
and also has the following property:
\beq
\int\limits_{L_\nu}P(z)dz=\int\limits_{L_\nu}dP^\circ=0
\label{4.9}
\eeq
Define now a function
\beq
M(z,\tau)=\int\limits_{z_0}^z P(\Gx)d\Gx.
\label{4.10}
\eeq
This function, as a function of $z$ and for any fixed $\tau\in L$, 
is analytic everywhere in the region $\frak F$ except at  the point $z=\tau$,
where it has a simple pole with the residue -1. 
Clearly, $M(z_0,\tau)=0$. Finally, from (\ref{4.2}),
$$
M(\Gs(z),\tau)=\int\limits_{z_0}^{\Gs(z_0)}P(\Gx)d\Gx+\int\limits_{\Gs(z_0)}^{\Gs(z)}P(\Gx)d\Gx
$$
\beq
=M(\Gs(z_0),\tau)+\int\limits_{z_0}^z P(\Gs(\Gx))d\Gs(\Gx)=H_\Gs M(z,\tau)+\Gz_\Gs(\tau), \quad \Gs\in\frak G,
\label{4.11}
\eeq
where $\Gz_\Gs(\tau)=M(\Gs(z_0),\tau)$. Because of the condition (\ref{4.9}) $M(z,\tau)$ is
a single-valued function in the fundamental region $\frak F$. The quasimultiplicativity property (\ref{4.11}) 
implies that the function $M(z,\tau)$ is single-valued everywhere in the region $\frak D$.
Thus, all the three properties (i) to (iii) have been verified that proves the existence of the kernel $M(z,\tau)$.
\end{proof}

If the group $\frak G$ is of the first class, and $|H_\Gs|=1$ for all $\Gs\in\frak G$ or, equivalently, 
$|H_\nu|=1$, $\nu=1,2,\ldots, N$, then the kernel $M(z,\tau)$ can be found explicitly \cite{sil}
through all the transformations of the group $\frak G$ in the form of 
an absolutely and uniformly convergent series
\beq
M(z,\tau)=
\sum_{\Gs\in\frak G}\fr{1}{H_\Gs}\left(
\fr{1}{\tau-\Gs(z)}-\fr{1}{\tau-\Gs(z_0)}\right).
\label{4.12}
\eeq
This kernel vanishes at $z=z_0$ and possesses the other two 
properties of the kernel $M(z,\tau)$. Indeed, because $\frak G\ni\Gs_0$,
\beq
M(z,\tau)=\fr{1}{\tau-z}-\fr{1}{\tau-z_0}+
\sum_{\Gs\in \frak G\setminus\Gs_0}\fr{1}{H_\Gs}\left(
\fr{1}{\tau-\Gs(z)}-\fr{1}{\tau-\Gs(z_0)}\right).
\label{4.13}
\eeq
To verify the property (\ref{4.0}), we employ 
the multiplicativity of $H$, $H_{\Gs\Go}=H_\Gs H_\Go$,  the relation
$$
M(\Gs(z),\tau)=\sum_{\Go\in \frak G}\fr{1}{H_\Go}\left(
\fr{1}{\tau-\Go\Gs(z)}-\fr{1}{\tau-\Go\Gs(z_0)}\right)
$$
\beq
+\sum_{\Go\in \frak G}\fr{1}{H_\Go}\left(
\fr{1}{\tau-\Go\Gs(z_0)}-\fr{1}{\tau-\Go(z_0)}\right),
\label{4.14}
\eeq
and make the substitution $\nu=\Go\Gs$. This ultimately gives
the relation wanted $M(\Gs(z),\tau)=H_\Gs  M(z,\tau)+\Gz_\Gs(\tau)$, where $\Gz_\Gs(\tau)=M(\Gs(z_0),\tau)$.

\setcounter{equation}{0}

\section{Solution to the Riemann-Hilbert problem}

Having now equipped with two analogues of the Cauchy kernel, 
the quasiautomorphic and quasimultiplicative kernels $K(z,\tau)$ and
$M(z,\tau)$, we solve Problem \ref{p2}. We begin with the homogeneous
Riemann-Hilbert problem.

\subsection{Homogeneous case: $q(t)\equiv 0$}

\begin{problem}\label{p3} {\it Find all functions $\GF(z)\in Q_\frak G(\frak L)$, H\"older-continuous everywhere 
in the domain $\frak D\cup\frak L$ apart from the set of points $\Gs(\GT)$, $\Gs\in\frak G$,
where they may have integrable singularities, bounded at the points $\Gs(\infty)$ and
satisfying the boundary condition 
\beq
\GF^+(t)=p(t)\GF^-(t), \quad t\in L\setminus\GT.
\label{5.1}
\eeq}
\end{problem}

By using (\ref{3.0}), we write the boundary condition (\ref{5.1}) in the form
\beq
\fr{\GF^+(t)}{\Gc^+(t)}=\fr{\GF^-(t)}{\Gc^-(t)}, \quad t\in L\setminus\GT,
\label{5.2}
\eeq
and analyze the function $\GF(z)/\Gc(z)$. It is a $T$-symmetric and $\frak G$-multiplicative
function with the factors $H_\Gs$, $\Gs\in\frak G$. If $\Gk_\nu>0$, then it has simple poles  
at the points $z_{\nu j}$ and $T(z_{\nu j})$ 
($j=1,2,\ldots, m_\nu$).
In the case $\Gk_\nu< 0$, the function $\GF(z)/\Gc(z)$ has simple zeros
 at the points $z_{\nu j}$ and $T(z_{\nu j})$. Let
\beq
\Gk_\nu^+=\left\{\begin{array}{cc}\Gk_\nu, & \Gk_\nu>0,\\
0, &\Gk_\nu\le 0,\\
\end{array}
\right.\quad  
\Gk_\nu^-=\left\{\begin{array}{cc} 0, & \Gk_\nu\ge 0,\\
\Gk_\nu, &\Gk_\nu<0,\\
\end{array}
\right.  
\label{5.3}
\eeq
and
\beq
\Gk^+=\sum_{\nu=0}^N\Gk_\nu^+, \quad \Gk^-=\sum_{\nu=0}^N\Gk_\nu^-.
\label{5.4}
\eeq
For arbitrary complex numbers $C_{\nu j}$, we define
\beq
\GO_0(z)=\sum_{\nu=0}^N\sum_{j=1}^{\Gk_\nu^+}C_{\nu j}M(z,z_{\nu j}).
\label{5.5}
\eeq
Since the function $M(z,\hat z)$ has a single pole $z=\hat z\in\frak F$, 
it is quite clear that the functions $\GO_0(z)+\ov{\GO_0(T(z))}$ and $\GF(z)/\Gc(z)$
have simple poles at the same points of the region $\frak F$.
By the generalized Liouville theorem for multiplicative functions,
\beq
\GF(z)=\Gc(z)[C_0+\GO_0(z)+\ov{\GO_0(T(z))}],
\label{5.6}
\eeq
where $C_0$ is a constant.

For arbitrary constants $C_0$ and $C_{\nu j}$, the function (\ref{5.6})
cannot be accepted as the solution to Problem \ref{p3}. Indeed, it
must be $T$-symmetric, $\frak G$-automorphic and piece-wise meromorphic. This is guaranteed if   
the constant $C_0$ is real and the function $\GF(z)/\Gc(z)$ is multiplicative with factors $H_\nu$.
Because of the quasimultiplicativity property (\ref{4.0})
of the kernel $M(z,\tau)$, however,
in general, 
\beq
\fr{\GF(\Gs_k(z))}{\Gc(\Gs_k(z))}=H_k\fr{\GF(z)}{\Gc(z)}+\Gx_k, \quad k=1,2,\ldots,N,
\label{5.7}
\eeq
where 
\beq
\Gx_k=C_0(1-H_k)+\sum_{\nu=0}^N\sum_{j=1}^{\Gk_\nu^+}[C_{\nu j}\Gz_k(z_{\nu j})
-H_k\ov{C_{\nu j}\Gz_k(z_{\nu j})}],
\label{5.8}
\eeq
where $\Gz_k(z)=M(\Gs_k(z_0),z)$.
Thus, the function $\GF(z)/\Gc(z)$ becomes multiplicative if and only if $\Gx_k=0$.
This condition can be written as follows:
\beq
\I\{H_k^{-1/2}C_0+H_k^{-1/2}\GO_0(\Gs_k(z_0))\}=0.
\quad k=1,2,\ldots,N.
\label{5.9}
\eeq
Next, for negative $\Gk_\nu$, the function (\ref{5.6}) has 
simple poles at the points $z_{\nu j}$. These points become removable 
points if the following conditions are met
\beq
C_0+\GO_0(z_{\nu j})+\ov{\GO_0(T(z_{\nu j}))}=0, \quad j=1,2,\ldots,-\Gk_\nu^-,
\quad \nu=0,1,\ldots,N.  
\label{5.10}
\eeq
Finally, if $-1<\Ga_{\nu 1}\le -\fr12$, then the function (\ref{5.6}) has a nonintegrable
singularity at the point $t_{\nu 1}$. To make   
this singularity integrable we require 
\beq
l_\nu[C_0+\GO_0(t_{\nu 1})+\ov{\GO_0(T(t_{\nu 1}))}]=0, \quad \nu=0,1,\ldots,N,
\label{5.11}
\eeq
where
\beq
l_\nu=\left\{
\begin{array}{cc}
0, & -1/2<\Ga_{\nu 1}\le 0,\\
1, & -1<\Ga_{\nu 1}\le -1/2.\\
\end{array}
\right.  
\label{5.12}
\eeq
The condition (\ref{5.11}) can be simplified. We consider two cases.
If $t_{\nu 1}=t_{01}$, 
then $T(t_{01})=t_{01}$, and 
\beq
C_0+\GO_0(t_{01})+\ov{\GO_0(T(t_{01}))}=C_0+2\R\GO_0(t_{01}).
\label{5.13}
\eeq
If $\nu\ne 0$, then
\beq
\ov{\GO_0(T(t_{\nu 1}))}=\ov{\GO_0(\Gs^{-1}_\nu T_\nu(t_{\nu 1}))}
=H_\nu\ov{\GO_0(t_{\nu 1})}-H_\nu\ov{\GO_0(\Gs_\nu(z_0))}.
\label{5.14}
\eeq
Then, by using the condition (\ref{5.9}), it is easy to verify that
\beq
C_0+\GO_0(t_{\nu 1})+\ov{\GO_0(T(t_{\nu 1}))}  
=H_\nu^{1/2}\R\{H_\nu^{-1/2}[C_0+2\GO_0(t_{\nu 1})-\GO_0(\Gs_\nu(z_0))]\}.
\label{5.15}
\eeq
The two formulas, (\ref{5.13}) and (\ref{5.15}) can be combined, and the conditions
(\ref{5.11})  become
\beq
l_\nu \R\{H_\nu^{-1/2}[C_0+2\GO_0(t_{\nu 1})-\GO_0(\Gs_\nu(z_0))]\}=0,\quad 
\nu=0,1,\ldots,N.
\label{5.16}
\eeq
Thus, there are  $2\Gk^++1$ real  arbitrary constants, $C_0$ and
$C_{\nu j}=C'_{\nu j}+iC''_{\nu j}$. These constants need to satisfy $N$ real conditions (\ref{5.9}),
$-\Gk^-$ complex conditions (\ref{5.10}) and $l=\sum_{\nu=0}^N l_\nu$ real
conditions (\ref{5.16}), in total, $N+l-2\Gk^-$ real conditions.
 
Let $\Gk=\Gk^++\Gk^-=\sum_{\nu=0}^N\Gk_\nu$.
Introduce an integer 
\beq
\frak K=2\Gk-l=\sum_{\nu=0}^N(2\Gk_\nu-l_\nu)
\label{5.17}
\eeq 
and call this number the {\it index} of Problems \ref{p3} and \ref{p2}. Denote by $\Gr$ the rank of the linear system
of $N+l-2\Gk^-$ real equations (\ref{5.9}), (\ref{5.10}), and (\ref{5.16}) 
($1\le\Gr\le N-2\Gk^-+l$).

\begin{theorem}\label{t5.1} {If the index $\frak K$ is negative, then Problem \ref{p3} has only the trivial 
solution. 

If $0\le \frak K\le 2N-2$, then 
Problem \ref{p3} has $2\Gk^+-\Gr+1$ nontrivial solutions (\ref{5.6}) 
over the field of real numbers
provided this number is positive and only the trivial solution otherwise.

If $\frak K>2N-2$, then Problem \ref{p3} has $\frak K-N+1$ solutions (\ref{5.6})
over the field of real numbers.}

\end{theorem}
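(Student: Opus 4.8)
The plan is to recast the count as the dimension of a space of sections on the double $\frak R$ and then read the three regimes off the Riemann--Roch theorem. First I would isolate the purely linear-algebraic content already assembled above: by (\ref{5.6}) every solution of Problem \ref{p3} is encoded by the $2\Gk^++1$ real parameters $C_0$ and $C_{\nu j}=C'_{\nu j}+iC''_{\nu j}$, and these are constrained by the $N$ real equations (\ref{5.9}), the $-\Gk^-$ complex equations (\ref{5.10}) and the $l$ real equations (\ref{5.16}), that is, by a homogeneous real system of $N+l-2\Gk^-$ equations of rank $\Gr$. Consequently the solution set is a real vector space of dimension
\beqn
(2\Gk^++1)-\Gr=2\Gk^+-\Gr+1,
\eeqn
which is exactly the assertion in the range $0\le\frak K\le 2N-2$, the value being read as $0$ whenever it is non-positive. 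It then remains only to determine $\Gr$ at the two ends of the range of $\frak K$.

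For that I would reinterpret this dimension on the genus-$N$ surface $\frak R$. Setting $\GY(z)=\GF(z)/\Gc(z)$, the relation (\ref{5.2}), the $T$-symmetry of $\GF$ and $\Gc$, and the multiplicativity (\ref{3.28}) show that $\GY$ is a $T$-symmetric multiplicative function with character $H$, i.e.\ a real meromorphic section of the flat, degree-zero line bundle on $\frak R$ determined by $H$. The admissibility requirements become divisor conditions: $\GY$ may have at most simple poles at the $2\Gk^+$ points $z_{\nu j},T(z_{\nu j})$ with $\Gk_\nu>0$, where $\Gc$ vanishes, and it is forced to vanish at the $-2\Gk^-$ points $z_{\nu j},T(z_{\nu j})$ with $\Gk_\nu<0$ by (\ref{5.10}) and at the $l$ points $t_{\nu 1}$ with $l_\nu=1$ by (\ref{5.16}); the latter count as single points of $\frak R$, since on the double $T(t_{\nu 1})=\Gs_\nu^{-1}(t_{\nu 1})$ is glued to $t_{\nu 1}$. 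Twisting the flat bundle by these admissible poles and forced zeros produces a line bundle $\frak B$ of degree
\beqn
\deg\frak B=2\Gk^++2\Gk^--l=2\Gk-l=\frak K,
\eeqn
and the solution space is identified with the real form (the $T$-symmetric sections) of $H^0(\frak R,\frak B)$. Because $\frak B$, its defining divisor and the antiholomorphic involution $T$ are all defined over the reals, this real form has real dimension equal to $\dim_{\mathbb C}H^0(\frak R,\frak B)$.

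Now I would apply Riemann--Roch on $\frak R$, $\dim H^0(\frak R,\frak B)-\dim H^1(\frak R,\frak B)=\deg\frak B-N+1=\frak K-N+1$. If $\frak K<0$ then $\deg\frak B<0$, so $H^0(\frak R,\frak B)=0$ and Problem \ref{p3} has only the trivial solution; compared with the dimension count above this forces $\Gr=2\Gk^++1$. If $\frak K>2N-2$ (the threshold $2g-2$ for the genus $g=N$), then $H^1(\frak R,\frak B)=0$ by Serre duality, whence $\dim H^0(\frak R,\frak B)=\frak K-N+1$; comparison gives the full rank $\Gr=N+l-2\Gk^-$ and the stated number $\frak K-N+1$ of solutions. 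In the intermediate range one has $\dim H^0=\frak K-N+1+\dim H^1$, with $\dim H^1$ equal to the defect $N+l-2\Gk^--\Gr$ of the explicit system, which reproduces $2\Gk^+-\Gr+1$.

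The step I expect to cost the most effort is the bookkeeping behind the two displayed identities: following the character $H$, and in particular its square roots $H_k^{1/2}$ appearing in (\ref{5.9}) and (\ref{5.16}), through the involution $T$; checking that the admissible poles and the forced zeros carry the correct multiplicities under the $T$-symmetry so that $\deg\frak B$ is exactly $\frak K$ (this is where the identification $T(t_{\nu 1})=\Gs_\nu^{-1}(t_{\nu 1})$ on $\frak R$ is essential, turning each condition (\ref{5.16}) into a single real constraint rather than two); and verifying that the $T$-symmetric sections of $\frak B$ form a real vector space of the same dimension as $H^0(\frak R,\frak B)$. Once this is secured, the three cases of the theorem are precisely the three Riemann--Roch regimes $\deg\frak B<0$, $0\le\deg\frak B\le 2N-2$, and $\deg\frak B>2N-2$.
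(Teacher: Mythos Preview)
Your proposal is correct and follows essentially the same approach as the paper: both arguments pass to $\GY=\GF/\Gc$ as a $T$-symmetric multiplicative function on the double $\frak R$, identify its divisor of degree $-\frak K$, and then read off the three cases from Riemann--Roch (your $H^1=0$ via Serre duality is precisely the paper's $d^-=0$ for the multiplicative forms of weight $-2$). The only cosmetic difference is language---you phrase things in terms of line bundles and cohomology where the paper uses classical divisors and multiplicative forms---and you correctly flag the $T$-symmetry/reality bookkeeping as the point requiring the most care, which the paper also treats briefly.
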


\begin{proof}
The multiplicative function $\GF(z)/\Gc(z)$ has simple zeros at the points
$z_{\nu j}$ and $T(z_{\nu j})$ if $\Gk_\nu<0$ and at the points $t_{\nu 1}$
if $-1<\Ga_{\nu 1}\le -\fr12$. The number of these points is equal to $l-2\Gk^-$.
The function $\GF(z)/\Gc(z)$ may have some other zeros. That is why, the number of
zeros in the fundamental region is not less than $l-2\Gk^-$. In the case 
$\Gk_\nu>0$, this function has $2\Gk^+$
simple poles at the points $z_{\nu j}$ and $T(z_{\nu j})$.
It is clear that the divisor of the multiplicative function 
$\GF(z)/\Gc(z)$ is a multiple of the divisor
\beq
\frak d=\prod_{\nu=0}^N t_{\nu 1}^{l_\nu}\prod_{j=1}^{|\Gk_\nu|}z_{\nu j}^{-\Gk_\nu}[T(z_{\nu j})]^{-\Gk_\nu},
\label{5.18}
\eeq
and $\deg \frak d=-2\Gk+l=-\frak K$. 

Let first $\frak K<0$. Then the degree of the divisor $\frak d$ is positive and 
since $\frak K=2\Gk^+-(l-2\Gk^-)$, this implies that the number of
zeros of
the multiplicative function $\GF(z)/\Gc(z)$ in the fundamental region $\frak F$
is greater than the number of poles. Such a nontrivial function does not exist.

Let now $\frak K>2N-2$. Notice that the dimension $d^-$ of the space of 
multiplicative forms of weight (-2)
with factors $H_\Gs^{-1}$, $\Gs\in\frak G$, whose divisors are multiples of the divisor 
$\frak d^{-1}$, is equal to zero. Indeed, on one hand $\deg \frak d^{-1}=\frak K$ and 
$\frak K>2N-2$.
On the other hand, the degree of the divisor of any multiplicative form of weight (-2)
on a genus-$N$ Riemann surface is equal to $2N-2$ \cite{kra}.  Let $d^+$ be the dimension of the space  
of multiplicative functions with factors $H_\Gs$, $\Gs\in\frak G$, whose divisors are 
multiples of the divisor $\frak d$. Then by the Riemann-Roch theorem \cite{far},
\beq
d^+=\deg\frak d^{-1}-N+1+d^-,
\label{5.19}
\eeq
and $d^+=\frak K-N+1$. Since the solution $\GF(z)$ has to be a $T$-symmetric
function, Problem 5.1 has $\frak K-N+1$ linearly independent solutions over
the field of real numbers.

In the final case, $0\le \frak K\le 2N-2$, the number of solutions depends on the rank
$\Gr$ of the linear system
of $N+l-2\Gk^-$ real equations (\ref{5.9}), (\ref{5.10}), and (\ref{5.16})
with respect to the $2\Gk^++1$ real unknowns $C_0$,
$C'_{\nu j}$, and $C''_{\nu j}$. 
Let $\tilde\Gr=2\Gk^++1-\Gr$.
Clearly, if $\tilde\Gr\le 0$, then $\GF(z)\equiv 0$.
Otherwise, Problem 5.1 has $\tilde\Gr$ nontrivial solutions defined by (\ref{5.6}). 

\end{proof}

\subsection{Inhomogeneous case: Problem \ref{p2}}  

Introduce the function
\beq
\Psi_0(z)=\fr{1}{4\pi i}\int\limits_LM(z,\tau)\fr{q(\tau)d\tau}{\Gc^+(\tau)}.
\label{5.20}
\eeq
From the Sokhotski-Plemelj formulas and the quasimultiplicativity of the kernel $M(z,\tau)$,
this function has the following properties:
$$
\Psi_0^+(t)-\Psi_0^-(t)=\fr{q(t)}{2\Gc^+(t)},
$$
\beq
\Psi_0(\Gs_\nu(z))=H_\nu\Psi_0(z)+\Psi_0(\Gs_\nu(z_0)).
\label{5.21}
\eeq
By applying the Liouville theorem for multiplicative functions,
we can derive the general solution to Problem \ref{p2} 
\beq
\GF(z)=\Gc(z)[C_0+\GO_0(z)+\ov{\GO_0(T(z))}+\Psi_0(z)+\ov{\Psi_0(T(z))}],
\label{5.22}
\eeq
where $C_0$ is an arbitrary real constant and $\GO_0(z)$ is the function (\ref{5.5}).
As in the homogeneous case, the function $\GF(z)$ has to be invariant 
with respect to the group $\frak G$. This is guaranteed by the following $N$ real conditions
\beq
\I\{H_k^{-1/2}[C_0+\GO_0(\Gs_k(z_0))+\Psi_0(\Gs_k(z_0))]\}=0,
\quad k=1,2,\ldots,N.
\label{5.23}
\eeq
In the case $\Gk_\nu<0$, the function (\ref{5.22}) has 
simple poles at the points $z_{\nu j}$. To remove these poles we require that
\beq
C_0+\GO_0(z_{\nu j})+\Psi_0(z_{\nu j})+\ov{\GO_0(T(z_{\nu j}))}+\ov{\Psi_0(T(z_{\nu j}))}=0.  
\label{5.24}
\eeq
Here $j=1,2,\ldots,-\Gk_\nu^-$, and 
$\nu=0,1,\ldots,N$.  Notice that in the case $-1<\Ga_{\nu 1}\le -\fr12$, in general,
the function $\GF(z)$ has a nonintegrable singularity at the points
$t_{\nu 1}$. The function $\GF(z)$ becomes integrable in this case if
we put
\beq
l_\nu\R\{H_\nu^{-1/2}[C_0+2\GO_0(t_{\nu 1})+2\Psi_0(t_{\nu 1})-\GO_0(\Gs_\nu(z_0))-\Psi(\Gs_\nu(z_0))]\}=0,
\label{5.25}
\eeq
where $\nu=0,1,\ldots,N$, and $l_\nu$ is given by (\ref{5.12}).

Having now written down the linear system of $N+l-2\Gk^-$ real equations (\ref{5.23}) to (\ref{5.25}) for $2\Gk^++1$ real constants, $C_0$, $C'_{\nu j}=\R C_{\nu j}$ and 
$C''_{\nu j}=\I C_{\nu j}$
($j=1,2,\ldots,\Gk_\nu^+$, $\nu=0,1,\ldots,N$),
we can study its solvability. The difference between this system and that in the 
homogeneous case is that now the equations are not homogeneous. 

If $\frak K<0$, then the associated homogeneous system has only a trivial solution.
In the inhomogeneous case, we can exclude all the constants $C_0$, $C'_{\nu j}$,
and $C''_{\nu j}$ from the system (\ref{5.23}) to (\ref{5.25}). This 
leaves us with a new system of  
$N+l-2\Gk^--2\Gk^+-1$ ($\Gk^++\Gk^-=\Gk$)
conditions. If the function $q(t)=\fr{2c(t)}{a(t)-ib(t)}$ in (\ref{2.9}) satisfies these conditions, then the solution to Problem \ref{p2} exists and it is unique.

If $\frak K>2N-2$, then the rank of the system (\ref{5.23}) to (\ref{5.25})
coincides with the number of the equations. Therefore, the system is always solvable
and the general solution to Problem \ref{p2} possesses $2\Gk-l-N+1$ arbitrary real constants.

In the case $0\le \frak K\le 2N-2$ the number of solutions and the number
of solvability conditions depends on the rank $\Gr$ 
($1\le\Gr\le N-2\Gk^-+l$) of 
the system (\ref{5.23}) to (\ref{5.25}). If the solvability conditions
are met, then the general solution may have up to $\frak K-N+1$ arbitrary real constants. 

Thus we have proved the following result. 

\vspace{.1in}
\noindent
\begin{theorem}\label{t5.2} { If the index $\frak K>2N-2$, then Problem \ref{p2}
is always solvable, the general solution possesses $\frak K-N+1$ arbitrary real constants,
and it is given by formula (\ref{5.22}). 

If  $\frak K <0$, then Problem \ref{p2} is solvable 
if and only if the functions $a(t)$, $b(t)$, and $c(t)$
satisfy a system of  $N-\frak K-1$ conditions taken out of $N+l-2\Gk^-$ equations (\ref{5.23}) to (\ref{5.25}). If these conditions are satisfied, then 
the solution (\ref{5.22}) to Problem \ref{p2} is unique.    

If $0\le \frak K\le 2N-2$, then the number of additional conditions from the system
does not exceed 
$N-\frak K-1$. If these conditions are met, then the solution to Problem \ref{p2}
exists, and the number of arbitrary real constants
does not exceed $\frak K-N+1$.}

\end{theorem}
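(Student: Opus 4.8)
The plan is to reduce everything to the finite real linear system already assembled in (\ref{5.23})--(\ref{5.25}) and then to read off solvability and the dimension of the solution space from the Riemann--Roch theorem, in close parallel with the proof of Theorem \ref{t5.1}. First I would record that (\ref{5.22}) is the most general admissible function: the generalized Liouville theorem for multiplicative functions, together with the jump relation and quasimultiplicativity (\ref{5.21}) supplied by the Sokhotski--Plemelj formulas for the kernel $M(z,\tau)$, forces $\GF(z)/\Gc(z)$ to be multiplicative with the factors $H_\Gs$ with poles only at the prescribed points, so any solution has the form (\ref{5.22}). Imposing $\frak G$-invariance, removal of the poles at $z_{\nu j}$ for $\Gk_\nu<0$, and integrability at $t_{\nu 1}$ then yields exactly the $N+l-2\Gk^-$ real equations (\ref{5.23})--(\ref{5.25}) in the $2\Gk^++1$ real unknowns $C_0$, $C'_{\nu j}$, $C''_{\nu j}$, and the whole theorem becomes an assertion about this system.

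The three cases I would handle by a degree count on the genus-$N$ surface $\frak R$. For $\frak K>2N-2$ I would argue, as in Theorem \ref{t5.1}, that the space of weight-$(-2)$ multiplicative forms with character $H^{-1}$ whose divisors are multiples of $\frak d^{-1}$ is trivial, because $\deg\frak d^{-1}=\frak K$ exceeds the degree $2N-2$ of the divisor of any such form; hence $d^-=0$, the coefficient matrix of (\ref{5.23})--(\ref{5.25}) has full row rank, the inhomogeneous system is consistent for every right-hand side, and the Riemann--Roch identity (\ref{5.19}) gives a real solution space of dimension $\frak K-N+1$. For $\frak K<0$ the associated homogeneous system admits only the trivial solution by Theorem \ref{t5.1}, so eliminating the $2\Gk^++1$ constants from (\ref{5.23})--(\ref{5.25}) leaves $N+l-2\Gk^--(2\Gk^++1)=N-\frak K-1$ linear compatibility conditions on $q(\tau)=2c(\tau)/(a(\tau)-ib(\tau))$, i.e. on $a$, $b$, $c$; when these hold, (\ref{5.22}) is uniquely determined. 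The intermediate range $0\le\frak K\le 2N-2$ is then controlled by the rank $\Gr$ of the same system, giving at most $N-\frak K-1$ compatibility conditions and, when they are met, at most $\frak K-N+1$ free real constants.

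The hard part will be the $\frak K>2N-2$ case, where I must establish that the finite system (\ref{5.23})--(\ref{5.25}) has maximal row rank rather than merely bounding it: this is precisely the step that upgrades the abstract vanishing $d^-=0$ into genuine surjectivity of the real coefficient matrix, and it is the degree bound $2N-2$ for divisors of multiplicative forms, fed into (\ref{5.19}), that forces it. Throughout I would keep careful track of which constraints are genuinely real -- the reality of $C_0$, the conditions (\ref{5.23}) and (\ref{5.25}) -- and which arise as conjugate pairs through the $T$-symmetry of $\GF(z)$, since it is exactly this halving of complex dimensions to real ones that makes the counts $N-\frak K-1$ and $\frak K-N+1$ emerge with the stated constants.
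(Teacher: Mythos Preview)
Your plan is correct and mirrors the paper's own argument almost exactly: the paper also reduces Theorem~\ref{t5.2} to the inhomogeneous linear system (\ref{5.23})--(\ref{5.25}), invokes Theorem~\ref{t5.1} for the $\frak K<0$ case to eliminate the $2\Gk^++1$ constants and obtain $N+l-2\Gk^--(2\Gk^++1)=N-\frak K-1$ compatibility conditions, and for $\frak K>2N-2$ asserts that the rank of the system equals the number of equations (this is exactly the full-row-rank claim you identify as the ``hard part''), which the paper justifies implicitly by the Riemann--Roch computation $d^-=0$, $d^+=\frak K-N+1$ already carried out in the proof of Theorem~\ref{t5.1}. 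Your write-up is, if anything, slightly more explicit than the paper's about why $d^-=0$ forces maximal row rank of the real coefficient matrix; otherwise the two arguments coincide.
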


Notice that the theory of solvability to the Hilbert problem (Problem \ref{p1}) 
coincides with that to 
Problem \ref{p2}. The general solution to Problem \ref{p1}
is also given by formula (\ref{5.22}), where we should put $z\in D$.

\setcounter{equation}{0}

\section{The general solution in terms of an automorphic canonical function}

In this section we derive another form of the solution to Problem \ref{p2}.
Instead of the multiplicative canonical function $\Gc(z)$ we  shall use
a piece-wise meromorphic $\frak G$-automorphic canonical function. This function would be 
a particular case of the multiplicative canonical $\Gc(z)$ function (\ref{3.14})
with factors $H_\Gs=1$, $\Gs\in\frak G$ if it did not have extra poles:
\beq
\Gc_a(z)=\exp\{\GG_a(z)+\ov{\GG_a(T(z))}\},
\label{6.1}
\eeq
where 
\beq
\GG_a(z)=\GG(z)+\sum_{\nu=1}^N\left(\int\limits_{r_\nu}^{q_\nu}K(z,\tau)d\tau+\Gl_\nu\int\limits_{L_\nu}
K(z,\tau)d\tau+\Gm_\nu\int\limits_{T(t_{\nu 1})}^{t_{\nu 1}}K(z,\tau)d\tau\right),
\label{6.2}
\eeq
$\GG(z)$ is given by (\ref{3.12}), $r_\nu, q_\nu\in D$, and $\Gl_\nu$ and $\Gm_\nu$ are integers. The points $r_\nu$ are fixed  arbitrarily while $q_\nu$, $\Gl_\nu$, and $\Gm_\nu$ 
are to be determined.
It is assumed that all the points $r_\nu$ and $q_\nu$ ($\nu=1,2,\ldots,N$)
are distinct, and none of them coincides
with the points $z_{\nu j}$ ($j=1,2,\ldots,|\Gk_\nu|$, $\nu=0,1,\ldots,N$). 
It is clear that the function $\Gc_a(z)$ 
possesses the properties (i) to (iv) in Definition \ref{d2} of the multiplicative function $\Gc(z)$.
Similarly to the function $\Gc(z)$, 
\beq
\Gc_a(\Gs_k(z))=\hat H_k^{-1}\Gc_a(z), \quad k=1,2,\ldots,N,
\label{6.3}
\eeq
where
the new factors $\hat H_k$ are given by
\beq
\hat H_k=\exp\{-2i\I\GG_a(\Gs_k(z_*))\}.
\label{6.4} 
\eeq
The function $\Gc_a(z)$ is invariant with respect to the group $\frak G$ if and only if
\beq
\I\GG_a(\Gs_k(z_*))\equiv 0 \quad (\mod \;\pi), \quad k=1,2,\ldots,N,
\label{6.5} 
\eeq
or, equivalently,
\beq
\R\left[\fr{1}{2\pi i}\GG_a(\Gs_k(z_*))\right]\equiv 0 \quad \left(\mod \;\fr12\right), 
\quad k=1,2,\ldots,N.
\label{6.6} 
\eeq
Show next that the conditions (\ref{6.6}) can be considered as the real part
of the classical Jacobi inversion problem for the genus-$N$ Riemann surface $\frak R$.
Let $r_0$ be a fixed point in the domain $D$. Introduce the integrals
\beq
\Gvf_k(z)=\fr{1}{2\pi i}\int\limits_{r_0}^{z}\eta_k(\tau)d\tau, \quad k=1,2,\ldots,N, 
\label{6.7}
\eeq
where $\eta_k(\tau)=K(\Gs_k(z_*),\tau)$. These integrals form the normalized basis
of abelian integrals of the first kind with $A$- and $B$-periods defined
in (\ref{3.4}). By using (\ref{6.7}) and (\ref{3.4}) we can transform the 
conditions (\ref{6.6}) as follows
\beq
\sum_{j=1}^N[\R\Gvf_k(q_j)+\mu_j\R B_{kj}]+\Gl_k\equiv \R d_k \quad \left(\mod\; \fr12\right), \quad k=1,2,\ldots,N,
\label{6.8}
\eeq
where
\beq
d_k=-\fr{1}{2\pi i}\GG(\Gs_k(z_*))+\sum_{j=1}^N\Gvf_k(r_j), \quad k=1,2,\ldots,N.
\label{6.9}
\eeq
Consider now another problem, a modulo-period-1-problem,
\beq
\sum_{j=1}^N[\R\Gvf_k(q_j)+\mu_j\R B_{kj}]\equiv \R d_k \quad (\mod 1), \quad k=1,2,\ldots,N.
\label{6.10}
\eeq 
Evidently, each solution to the system (\ref{6.10}) is a solution to the system
(\ref{6.8}). The new system (\ref{6.10}) can be treated as the "real part"
of the classical Jacobi inversion problem for the surface $\frak R$
\beq
\sum_{j=1}^N[\Gvf_k(q_j)+\mu_jB_{kj}]\equiv \R d_k +i\Ge_k \quad (\mod 1),\quad k=1,2,\ldots,N,
\label{6.11}
\eeq 
where $\Ge_k$ are arbitrary real numbers. It is known \cite{far}
that the solution to this problem, the points 
$q_k$ and the integers $\mu_k$, exist, and its solution can be expressed
through the zeros of the associated genus-$N$ Riemann theta function \cite{zve}, \cite{ant0}. 
Note that the numbers $\Ge _k$ can always be chosen such that
the points $q_k$ coincide with none of the points $r_k$ ($k=1,2,\ldots,N$) 
and $z_{\nu j}$ ($j=1,2,\ldots,|\Gk_\nu|$, $\nu=0,1,\ldots,N$). 

The new canonical function $\Gc_a(z)$, given by (\ref{6.1}) and (\ref{6.2}),
is invariant with respect to the group $\frak G$, $\Gc_a(\Gs(z))=\Gc_a(z)$, $z\in\frak D\setminus\frak L$.
Another difference between this function and the multiplicative function $\Gc(z)$
is the presence of extra zeros and poles of the function $\Gc_a(z)$.
At the points $q_k$ and $T(q_k)$, the function $\Gc_a(z)$ has simple zeros,
and  the points $r_k$ and $T(r_k)$ are simple poles ($k=1,2,\ldots,N$).

We now repeat the procedure of Section 5 adjusting it to the class of 
symmetric piece-wise meromorphic multiplicative functions with factors $H_k=1$. The 
general solution to Problem \ref{p2} has the form
\beq
\GF(z)=\Gc_a(z)[C_0+\GO_a(z)+\Psi_a(z)+\ov{\GO_a(T(z))}+\ov{\Psi_a(T(z))}],
\label{6.12}
\eeq
where
$$
\GO_a(z)=\sum_{\nu=0}^N\sum_{j=1}^{\Gk_\nu^+}C_{\nu j}K(z,z_{\nu j})
+\sum_{j=1}^N A_jK(z,q_j),
$$
\beq
\Psi_a(z)=\fr{1}{4\pi i}\int\limits_L K(z,\tau)\fr{q(t)d\tau}{\Gc_a(\tau)}.
\label{6.13}
\eeq
In comparison to the solution (\ref{5.22}),  the new solution (\ref{6.12}) has
$N$ extra complex arbitrary constants $A_j$, and in total
it has $2\Gk^++2N+1$ real constants. The conditions of solvability 
of the problem consist of $N+l-2\Gk^-$ real equations (\ref{5.23}) to (\ref{5.25}), 
where we should put $H_k=1$ and replace the functions $\GO_0(z)$ and $\Psi_0(z)$
by the functions  $\GO_a(z)$ and $\Psi_a(z)$, respectively.
In addition, to remove the simple poles at $r_j$ and $T(r_j)$
of the automorphic canonical function
$\Gc_a(z)$, we require
\beq
C_0+\GO_a(r_j)+\Psi_a(r_{j})+\ov{\GO_a(r_{j})}+\ov{\Psi_a(T(r_{ j}))}=0, 
\quad j=1,2,\ldots,N.
\label{6.14}
\eeq 
This brings us $2N$ extra real conditions and makes the difference 
between the number of constants and the number of solvability
conditions invariant to the analogue of the Cauchy kernel chosen.

\setcounter{equation}{0}

\section{Piece-wise constant coefficients $a(t)$ and $b(t)$: the solution for the first class
group $\frak G$}

In this section we consider a particular case when the coefficients
$a_\nu(t)$ and $b_\nu(t)$ $t\in L_\nu$  ($\nu=0,1,\ldots,N)$
are piece-wise constant. If, in addition, the group $\frak G$ is a first class group,
the formula for the multiplicative canonical function $\Gc(z)$ can be simplified.
Let
$$
a_\nu(t)=a_{\nu j}=\const, \quad b_\nu(t)=b_{\nu j}=\const,
$$
\beq
\quad t\in t_{\nu\, j}t_{\nu\, j+1}, \quad j=1,2,\ldots,m_\nu,\quad \nu=0,1,\ldots,N,
\quad t_{\nu\,m_\nu+1}=t_{\nu 1}.
\label{7.1} 
\eeq
In this case $p(\tau)$
is also a piece-wise constant function,
\beq
p(\tau)=p_{\nu j}=-\fr{a_{\nu j}+ib_{\nu j}}{a_{\nu j}-ib_{\nu j}}, 
\quad \tau\in t_{\nu\, j}t_{\nu\, j+1}, \quad j=1,2,\ldots,m_\nu,\quad \nu=0,1,\ldots,N.
\label{7.2}
\eeq
According to the inequalities (\ref{3.8}), (\ref{3.10}), and (\ref{3.11})
the values of the piece-wise function $\arg p(\tau)=\arg p_{\nu j}$ 
and the integers $\Gk_\nu$ are defined by
$$
-\pi<\arg p_{\nu 1}\le \pi,
$$$$
-2\pi<\arg p_{\nu\,j-1}-\arg p_{\nu\, j}\le 0, \quad j=2,\ldots, m_\nu, \quad
\nu=0,1,\ldots,N,
$$
\beq
-4\pi<\arg p_{\nu m_{\nu}}-\arg p_{\nu 1}-4\pi\Gk_\nu\le 0, \quad \nu=0,1,\ldots,N.
\label{7.3}
\eeq
Assuming that $\frak G$ is a first class group, evaluate the integrals in (\ref{3.12}).
In this case, the kernel $K(z,\tau)$ is a uniformly and absolutely convergent series
(\ref{3.6}), and formula (\ref{3.12}) reads
\beq
\GG(z)=\fr{1}{4\pi}\sum_{\nu=0}^N\sum_{j=1}^{m_\nu}
\arg p_{\nu j}
\int\limits_{t_{\nu j}}^{t_{\nu\, j+1}}\sum_{\Gs\in\frak G}
\fr{\Gs'(\tau)}{\Gs(\tau)-z}d\tau +\sum_{\nu=0}^N\sgn\Gk_\nu\sum_{j=1}^{|\Gk_\nu|}\int\limits_{t_{\nu 1}}
^{z_{\nu j}}\fr{\Gs'(\tau)}{\Gs(\tau)-z}d\tau.  
\label{7.4}
\eeq
Evaluating the integrals and using formulas (\ref{3.16}) we can write
\beq
\GG(z)=\ln\prod_{\Gs\in\frak G}\prod_{\nu=0}^N\prod_{j=1}^{m_\nu}
(\Gs(t_{\nu j})-z)^{\Ga_{\nu j}}
\prod_{j=1}^{|\Gk_\nu|}(\Gs(z_{\nu j})-z)^{\sgn\Gk_\nu}.
\label{7.5}
\eeq
For the canonical function $\Gc(z)$, we also need $\GG(T(z))$. Since $t_{\nu j}\in L_\nu$,
we have $t_{\nu j}=T(t_{\nu j})$. Making the substitution $T\Gs T_\nu=\Go\in\frak G$ and
$T\Gs T=\Go_*\in\frak G$
we can establish the following relations:
$$
\ov{\Gs(t_{\nu j})-T(z)}=\fr{\Gr_0^2(z-\Go(t_{\nu j}))}{(\Go(t_{\nu j})-\Gd_0)(z-\Gd_0)},
$$
\beq
\ov{\Gs(z_{\nu j})-T(z)}=
\fr{\Gr_0^2(z-\Go_*(z^*_{\nu j}))}{(\Go_*(z^*_{\nu j})-\Gd_0)(z-\Gd_0)}.
\label{7.6}
\eeq
Here $z^*_{\nu j}=T(z_{\nu j})$.
Combining the two equalities in (\ref{7.6}) with the expression for $\ov{\GG(T(z))}$
obtained from (\ref{7.5}) we derive the canonical function (\ref{3.14})
\beq
\Gc(z)=\left(\fr{\Gr_0^2}{z-\Gd_0}\right)^\Gg\GP(z),
\label{7.7}
\eeq
where
$$
\Gg=\sum_{\nu=0}^N\left(\sum_{j=1}^{m_\nu}\Ga_{\nu j}+\Gk_\nu\right),
$$
\beq
\GP(z)=\prod_{\Gs\in\frak G}\prod_{\nu=0}^N\left[
\prod_{j=1}^{m_\nu}\left(\fr{(\Gs(t_{\nu j})-z)^2}{\Gd_0-\Gs(t_{\nu j})}
\right)^{\Ga_{\nu j}}
\prod_{j=1}^{|\Gk_{\nu}|}\left(\fr{(z-\Gs(z_{\nu j}))(z-\Gs(z^*_{\nu j}))}{\Gd_0-\Gs(z^*_{\nu j})}\right)^{\sgn\Gk_\nu}
\right].
\label{7.8}
\eeq
This formula can further be simplified. Indeed, from the definition (\ref{3.16})
of the numbers $\Ga_{\nu j}$ and from the first formula in (\ref{7.8})
we derive $\Gg=0$, and therefore $\Gc(z)=\GP(z).$ 

\begin{example}\label{e7.1} Consider a particular case of Problem 2.1 when  all
$m_\nu$ are even: $m_\nu=2n_\nu$ ($\nu=0,1,\ldots,N$), and  
$$
\R\Gf(t)=c(t), \quad t\in t_{\nu j}t_{\nu j+1}, \quad j=1,3,\ldots, 2n_\nu-1,
$$
\beq
\I\Gf(t)=c(t), \quad t\in t_{\nu j}t_{\nu j+1}, \quad j=2,4,\ldots, 2n_\nu,
\label{7.9}
\eeq
and $c(t)$ is continuous on $t_{\nu j} t_{\nu j+1}$, $j=1,2,\ldots, 2n_\nu$.
In this case,
\beq 
p_{\nu j}=\left\{\begin{array}{cc}
-1, & j=1,3,\ldots, 2n_\nu-1,\\
1, & j=2,4,\ldots, 2n_\nu.\\
\end{array}
\right.
\label{7.10}
\eeq
From  the definition (\ref{7.3}) of $\arg p_{\nu j}$ and the numbers $\Gk_\nu$,
\beq
\arg p_{\nu j}=\pi j, \quad j=1,2,\ldots, 2n_\nu,
\quad \Gk_\nu=\left[\fr{n_\nu+1}{2}\right].
\label{7.11}
\eeq
where $[a]$ is the integer part of a number $a$.
This implies
\beq
\Ga_{\nu 1}=\left\{\begin{array}{cc}
-3/4, & n_\nu=2s_\nu-1,\\
-1/4, & n_\nu=2s_\nu,\\
\end{array}\right.
\quad 
\Ga_{\nu j}=-\fr14, \quad j=2,3,\ldots,2n_\nu,\quad 
\nu=0,1,\ldots,N.
\label{7.12}
\eeq
We now observe that $\Gk^-_\nu=0$, $\Gk^+_\nu=\Gk_\nu=[(n_\nu+1)/2]$.

Simple computations show that in both the cases, $\Ga_{\nu 1}=-\fr34$ and $\Ga_{\nu 1}=-\fr14$,
the index of the problem is $\frak K=\sum_{\nu=0}^Nn_\nu$. The general solution (\ref{5.22})
possesses  $\frak K+l+1$ arbitrary constants $C_0$ and $C_{\nu j}$, $\nu=0,1,\ldots, N$,
$j=1,2,\ldots,\Gk_\nu$. The solution has to satisfy $N+l$ conditions of solvability
(\ref{5.23}) and (\ref{5.25}).                
The difference between the number of the arbitrary constants
and the number of the conditions is $\frak K+1-N$. 
\end{example}

\setcounter{equation}{0}

\section{Circular $(N+1)$-connected Hall plate with electrodes and dielectrics}

\subsection{Statement of the problem}

Consider a semiconductor $D$, an infinite $N+1$-connected circular plate with finite
contacts on the circles  
$L_\nu$ which form the boundary of the structure (Fig. 2). We assume that on the circles
$L_\nu$ ($\nu =0,1,\ldots,N_0$), the number of the electrodes is even, $n_\nu=2s_\nu$,
and
on the other circles $L_\nu$ ($\nu =N_0+1,N_0+2,\ldots,N$), the number of electrodes is odd,
$n_\nu=2s_\nu-1$.
Here $N_0\in[-1,N]$. If $N_0=-1$, then all the circles have an odd number of electrodes.
If $N_0=N$, then all the circles have an even number of electrodes. 
Let the $j$-th electrode on the circle $L_\nu$ be denoted as 
$e_{\nu j}=t_{\nu\,2j-1}t_{\nu\,2j}$,
$j=1,2,\ldots,n_\nu$. It is assumed that the rest of the boundary of each circle is insulated.
\begin{figure}[t]
\centerline{
\scalebox{0.6}{\includegraphics{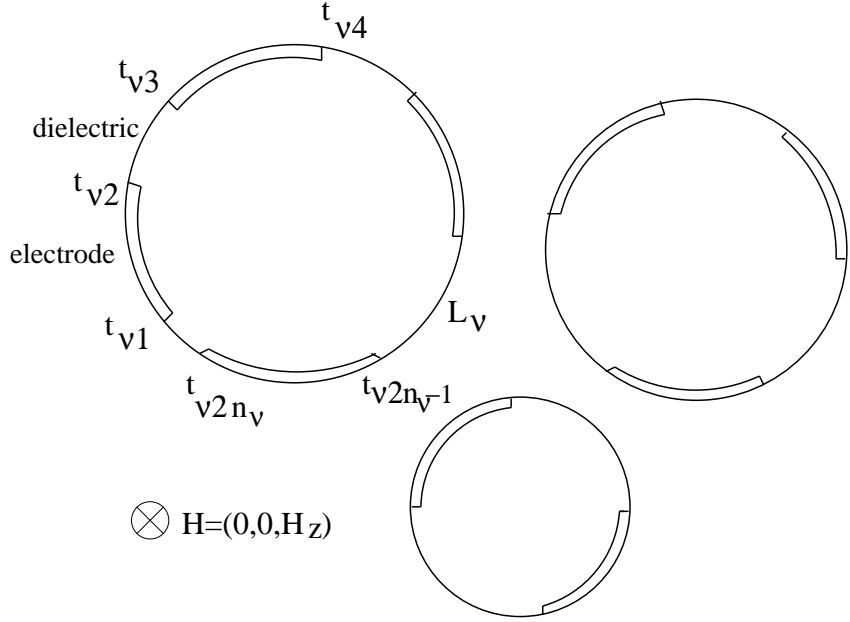}}
}
\caption{An infinite multiply connected Hall plate.}
\label{fig2}
\end{figure}

Let the magnetic field be orthogonal to the plate, and its intensity $\BH$ be prescribed,
$\BH=(0,0,H_z)$, $H_z=\const$. 
The system is activated by applied
electric field flowing 
through the electrodes  
\beq
J_{\nu j}=h_0\int\limits_{e_{\nu j}}J_n d\tau,\quad j=1,2,\ldots,n_\nu,
\quad \nu=0,1,\ldots,N.
\label{8.1}
\eeq
Here  $h_0$ is the thickness of the plate,
$J_n$ is the normal component of the current density $\BJ=(J_x,J_y,0)$,
and $J_{\nu j}$ are the total currents
flowing through the electrodes. Assume also that at infinity
there is no source of an external current. Then 
the electric field intensity $\BE=(E_x,E_y,0)$ vanishes at infinity,
\beq
\BE=\fr{\BA_0}{z}+O(z^{-2}), \quad z\to\infty,
\label{8.2'}
\eeq
where $\BA=(A_x,A_y,0)$ is a constant nonzero vector, and the currents
$J_{\nu j}$ have to be prescribed such that
\beq
\sum_{\nu=0}^N\sum_{j=1}^{n_\nu}J_{\nu j}=0.
\label{8.2}
\eeq
Because of the applied electric and magnetic fields, the semiconductor
develops a component of electric field orthogonal
to both the electric and magnetic fields. This phenomenon, known as the Hall effect,
is described by the generalized Ohm's law
\beq
\BE=\Ga\BJ-R_H\BJ\wedge\BH,
\label{8.3}
\eeq
where 
$\Ga$ is the resistivity in the absence of the magnetic field, and $R_H$ is the Hall 
coefficient.
The Maxwell equations written for a source-free 2-d-medium in the steady-state case 
imply the harmonicity of the current in the domain $D$. On the electrodes,
the tangential component $E_\tau$ of the electric field intensity vanishes, whilst
on the dielectrics (insulated walls), the normal component $J_n$
of the current intensity vanishes:
$$
-E_x\sin\Gt+E_y\cos\Gt=0, \quad t\in e_{\nu},
$$
\beq
J_x\cos\Gt+J_y\sin\Gt=0, \quad t\in d_{\nu},
\label{8.4}
\eeq
where $\Gt$ is the polar angle in the parametrization of the circle $L_\nu$,
$t-\Gd_\nu=\Gr_\nu e^{i\Gt}$, $e_{\nu}$ and $d_\nu$ are the unions of the electrodes
are the dielectrics on the circle $L_\nu$, respectively,
\beq
e_{\nu}=\bigcup_{j=1,3,\ldots}^{2n_\nu-1}t_{\nu j}t_{\nu j+1} ,\quad
d_{\nu}=\bigcup_{j=2,4,..}^{2n_\nu} t_{\nu j}t_{\nu j+1},
\label{8.5}
\eeq
and $t_{\nu 2n_\nu+1}=t_{\nu 1}$. By employing the Ohm's law (\ref{8.3}) rewrite the first
boundary condition in (\ref{8.4}) in the form
\beq
(\Ga\sin\Gt+R_H\cos\Gt)J_x-(\Ga\cos\Gt-R_H\sin\Gt)J_y=0.
\label{8.6}
\eeq
Introduce next a new function, $\Gf(z)=J_x-iJ_y$, analytic in the domain $D$
and satisfying the Hilbert boundary condition
\beq
a(t)u(t)+b(t)v(t)=0,
\label{8.6'}
\eeq
where
$$
u(t)=J_x,\quad v(t)=-J_y,
$$
$$
a(t)=\left\{\begin{array}{cc}
(\Ga+i\Gb)(t-\Gd_\nu)^2-(\Ga-i\Gb)\Gr_\nu^2, & t\in e_{\nu},\\
(t-\Gd_\nu)\Gr_\nu^{-1}+\Gr_\nu(t-\Gd_\nu)^{-1}, & t\in d_{\nu},\\
\end{array}\right.
$$
\beq
b(t)=\left\{\begin{array}{cc}
i[(\Ga+i\Gb)(t-\Gd_\nu)^2+(\Ga-i\Gb)\Gr_\nu^2], & t\in e_{\nu },\\
i[(t-\Gd_\nu)\Gr_\nu^{-1}-\Gr_\nu(t-\Gd_\nu)^{-1}], & t\in d_{\nu },\\
\end{array}\right.
\label{8.6''}
\eeq
$\Gb=H_zR_H$. Clearly, (\ref{8.6'}) is a particular case ($c(t)=0$) of the boundary condition (\ref{2.2}).
The function $\Gf(z)$ is sought in the class of functions which are holomorphic in $D$, $H$-continuous in 
$D\cup L$ except for the points of the set $\GT=\cup_{\nu=0}^N \GT_{\nu}$,
$\GT_\nu=\{t_{\nu 1}, \ldots, t_{\nu 2n_\nu}\}$, where it may have integrable singularities.

\subsection{Solution to the Hilbert problem}

As it was shown in Section 2 for the general case, the Hilbert problem (\ref{8.6'})
is equivalent to the homogeneous Riemann-Hilbert problem
\beq
\GF^+(t)=p(t)\GF^-(t), \quad t\in L\setminus\GT.
\label{8.6.0}
\eeq
for the function $\GF(z)$
defined in (\ref{2.3}). The coefficient in (\ref{8.6.0}) is
$p(t)=p_1(t)p_2(t)$,
where $p_1(t)$ is a continuous function and $p_2(t)$ is a piecewise constant 
function given by
$$
p_1(t)=-\fr{\Gr_\nu^2}{(t-\Gd_\nu)^2}, \quad t\in L_\nu, 
$$
$$
p_2(t)=p_{\nu j}, \quad t\in t_{\nu j}t_{\nu j+1}, \quad j=1,2,\ldots,2n_\nu,
$$ 
\beq
p_{\nu j}=\left\{\begin{array}{cc}
-(\Ga-i\Gb)(\Ga+i\Gb)^{-1}, & j=1,3,\ldots,2n_\nu-1,\\
1, & j=2,4,\ldots,2n_\nu.\\
\end{array}\right.
\label{8.7}
\eeq
In addition, because of the condition (\ref{8.2'}), it is required that the solution has the following 
asymptotics at infinity
\beq
\GF(z)=\fr{K}{z}+O(z^{-2}), \quad z\to\infty, 
\label{8.7'}
\eeq
where $K$ is a nonzero constant.

We split the canonical function of the problem as follows:
\beq
\Gc(z)=\Gc_1(z)\Gc_2(z),
\label{8.8}
\eeq
where the first function, $\Gc_1(z)$, factorizes the continuous function $p_1(t)$, and
the second one factorizes the piece-wise constant function $p_2(t)$.
In order to find the function $\Gc_1(z)$, choose the starting point $t_{\nu 1}\in L_\nu$.
Then
the function $\Gc_1(z)$ is determined by
\beq
\Gc_1(z)=\Gc_*(z)\exp\left\{\GG_1(z)+\ov{\GG_1(T(z))}\right\},
\label{8.9}
\eeq   
where $\Gc_*(z)$ is a piece-wise automorphic function in the domain $\frak D\setminus\frak L$ such that
\beq
\Gc_*(z)=\left\{\begin{array}{cc}
i,&\; z\in \Gs(D),\\
-i,&\; z\in \Gs(T(D)),\\
\end{array}
\right.\quad \Gs\in\frak G.
\label{8.10}  
\eeq
This function satisfies the boundary condition $\Gc_*(t)=-\Gc_*(t)$,
$t\in L_\nu$ ($\nu=0,1,\ldots,N$), and the symmetry and automorphicity conditions (\ref{2.5}) and (\ref{2.4}).
The function $\GG_1(z)$
is determined by the singular integrals
\beq
\GG_1(z)=\sum_{\nu=0}^N\fr{1}{2\pi i}\int\limits_{L_\nu}\ln\fr{\Gr_\nu}{\tau-\Gd_{\nu }}K(z,\tau)d\tau,
\label{8.11}
\eeq
where 
a branch of the logarithmic function $\ln[\Gr_\nu(z-\Gd_\nu)^{-1}]$  is fixed in the $z$-plane cut along a line
joining the branch points $z=\Gd_{\nu}$ and $z=\infty$
and passing through the point $t_{\nu 1}$.  
The function $\Gc_1(z)$ is a piece-wise $\frak G$-multiplicative  function,
\beq
\Gc_1(\Gs_j(z))=[H^{(1)}_{j}]^{-1}\Gc_1(z),\quad j=1,2,\dots,N,
\label{8.12}
\eeq
where
\beq
H^{(1)}_{j}=\exp\{-2i\I h_j^{(1)}\}, \quad 
h_j^{(1)}=\sum_{\nu=0}^N\fr{1}{2\pi i}\int\limits_{L_\nu}\ln\fr{\Gr_\nu}{\tau-\Gd_\nu}\eta_j(\tau)d\tau.
\label{8.13}
\eeq
For the first class groups, the integrals in the expression for the function $\Gc_1(z)$ can be evaluated. By 
choosing
$z_*=\infty$ we obtain 
\cite{ant1}
\beq
\Gc_1(z)=A_1\fr{\Gc_*(z)q(z)}{z-\Gd_0},\quad z\in D\cup T(D),
\label{8.14}
\eeq
where
$$
q(z)=(z-t_{0 1})
\prod_{\nu=1}^N\fr{z-t_{\nu 1}}{z-\Gd_\nu}\prod_{j=0}^N\left(
\prod_{\Gs\in\frak G_j'}\fr{z-\Gs(t_{j 1})}{z-\Gs(\infty)} 
\prod_{\Gs\in\frak G_j''}\fr{z-\Gs(t_{j 1})}{z-\Gs(\Gd_j)}\right), 
$$
\beq
A_1=\fr{\Gr_0}{\sqrt{q(\Gd_0)}},
\label{8.15}
\eeq
where  $\frak G'_j$ is the set of all transformations $T_{m_{2\mu}}T_{m_{2\mu-1}}\ldots
T_{m_2}T_j$, $m_2\ne j$, $m_3\ne m_2$, $\ldots$, $m_{2\mu}\ne m_{2\mu-1}$, $\mu=1,2,\ldots$,
where all the indices $m_\Gm$ vary from $0$ to $n$: $m_{\Gm}=0,1,\ldots,n$,
The set $\frak G''_j=\frak G\setminus\frak G'_j\setminus\Gs_0$ includes all the other transformations 
$T_{m_{2\mu}}T_{m_{2\mu-1}}\ldots T_{m_2}T_{m_1}$ ($m_1\ne j$) of the group $\frak G$ except for
the identical transformation $\Gs_0$. 

Notice that the constant $A_1$ in the representation (\ref{8.14}) of the canonical function
$\Gc_1(z)$ cannot be removed. It is needed to satisfy the symmetry condition
$\Gc_1(z)=\ov{\Gc_1(T(z))}$. The branch in (\ref{8.15}) is chosen arbitrarily.
Its choice affects the sign of the constant and does not break the symmetry of the canonical
function. 
We shall also need a series representation of the coefficients $h_\nu^{(1)}$. For the first class
group $\frak G$, formula (\ref{8.13}) yields \cite{ant1}
$$
2h_\nu^{(1)}=\ln\fr{T_\nu(\Gd_0)-\Gd_\nu}{\Gr_\nu}+\sum_{j=0}^N\left[
\ln\fr{T_\nu(\Gd_0)-t_{j1}}{T_\nu(\Gd_0)-\Gd_j}
\right.
$$
\beq
\left.
+\sum_{\Gs\in\frak G_j'}
\ln\fr{T_\nu(\Gd_0)-\Gs(t_{j1})}{T_\nu(\Gd_0)-\Gs(\infty)}+\sum_{\Gs\in\frak G_j''}
\ln\fr{T_\nu(\Gd_0)-\Gs(t_{j1})}{T_\nu(\Gd_0)-\Gs(\Gd_j)}\right].
\label{8.16}
\eeq
In Section 7 we have determined the canonical function (\ref{7.7}) and showed that $\Gg=0$.
To use this formula for the function $\Gc_2(z)$, we specify the parameters.
Notice that the numbers $m_\nu$ are always even, $m_\nu=2n_\nu$, and
the parameter $\Gb$ can be any finite real number.
Introduce the parameter
\beq
\Gd=2\tan^{-1}\fr{\Ga}{|\Gb|}\in(0,\pi).
\label{8.17}
\eeq
Let first $\Gb>0$.
According to the inequalities (\ref{7.3})
we choose $\arg p_{\nu j}$ as follows
$$
\arg p_{\nu j}=\Gd+(j-1)\pi, \quad j=1,3,\ldots,2n_\nu-1,
$$ 
\beq
\arg p_{\nu j}=j\pi, \quad j=2,4,\ldots,2n_\nu.
\label{8.18}
\eeq
It remains now to write down the integers $\Gk_\nu$ and the parameters $\Ga_{\nu j}$. Since the function
$\Gc_1(z)$ has simple zeros at the points $t_{\nu 1}$ it will be convenient to 
choose the integers $\Gk_\nu$ such that the following inequalities hold:
\beq
-4\pi<\arg p_{\nu \, 2n_\nu}-\arg p_{\nu 1}-4\pi\Gk_\nu+2\pi\le 0,
\label{8.19}
\eeq
from which we obtain
\beq
\Gk_\nu=\left[\fr{n_\nu}{2}\right]+1=\left\{\begin{array}{cc}
s_\nu+1, & n_\nu=2s_\nu,\\
s_\nu, & n_\nu=2s_\nu-1.\\
\end{array}\right.
\label{8.20}
\eeq
We also need to determine the constants $\Ga_{\nu j}$,
$$
\Ga_{\nu 1}=\fr{1}{4\pi}(\arg p_{\nu\, 2n_\nu}-\arg p_{\nu 1}+2\pi)-\Gk_\nu=
\left\{\begin{array}{cc}
-\Gd_*-\fr12, & n_\nu=2s_\nu,\\
-\Gd_*, & n_\nu=2s_\nu-1,\\
\end{array}\right.
$$
\beq 
\Ga_{\nu j}=\fr{1}{4\pi}(\arg p_{\nu\, j-1}-\arg p_{\nu\, j})=\left\{\begin{array}{cc}
-\Gd_*, & j=3,5,\ldots,2n_\nu-1,\\
\Gd_*-\fr12, & j=2,4,\ldots,2n_\nu,\\
\end{array}\right.
\label{8.20'}
\eeq
where
\beq
\Gd_*=\fr{\Gd}{4\pi}, \quad \nu=0,1,\ldots,N.
\label{8.21}
\eeq
Consider next the case $\Gb<0$. The parameters of interest are defined as follows:
$$
\arg p_{\nu j}=-\Gd+(j-1)\pi, \quad j=1,3,\ldots,2n_\nu-1,
$$ 
$$
\arg p_{\nu j}=(j-2)\pi, \quad j=2,4,\ldots,2n_\nu,
$$
$$
\Ga_{\nu 1}=\left\{\begin{array}{cc}
\Gd_*-1, & n_\nu=2s_\nu-1,\\
\Gd_*-\fr12, & n_\nu=2s_\nu,\\
\end{array}\right.
$$
\beq 
\Ga_{\nu j}=\left\{\begin{array}{cc}
\Gd_*-\fr12, & j=3,5,\ldots,2n_\nu-1,\\
-\Gd_*, & j=2,4,\ldots,2n_\nu,\\
\end{array}\right.
\quad 
\nu=0,1,\ldots,N,
\label{8.22}
\eeq
and the integers $\Gk_\nu=[n_\nu/2]+1$ are the same as in the case $\Gb>0$. 
Now, with the parameters $\Ga_{\nu j}$ and $\Gk_\nu$ being defined we can write down the
function $\Gc_2(\Gz)$
\beq
\Gc_2(z)=\prod_{\Gs\in\frak G}\prod_{\nu=0}^N\left[
\prod_{j=1}^{2n_\nu}\left(\fr{(\Gs(t_{\nu j})-z)^2}{\Gd_0-\Gs(t_{\nu j})}
\right)^{\Ga_{\nu j}}
\prod_{j=1}^{\Gk_{\nu}}\left(\fr{(z-\Gs(z_{\nu j}))(z-\Gs(z^*_{\nu j}))}{\Gd_0-\Gs(z^*_{\nu j})}\right)
\right].
\label{8.23}
\eeq
The function $\Gc_2(z)$ is the multiplicative canonical 
function with the factors $H^{(2)}_{j}$ obtained 
by replacing the factors $H_j$ derived  in Section 3 by $H^{(2)}_{j}=\exp(-2i\I h_\nu^{(2)})$, $h_\nu^{(2)}=\GG_2(\Gs_\nu(z_*))$. 
The function $\GG_2(z)$ coincides with the function $\GG(z)$ in (\ref{3.12})
if $p(\tau)$
is replaced by $p_2(\tau)$.
 
Having found the functions $\Gc_1(z)$ and $\Gc_2(z)$, we write down the general solution of the Riemann-Hilbert problem
\beq
\GF(z)=\Gc_1(z)\Gc_2(z)[C_0+\GO_0(z)+\ov{\GO_0(T(z))}],\quad 
\label{8.24}
\eeq
where 
\beq
\GO_0(z)=\sum_{\nu=0}^N\sum_{j=1}^{\Gk_\nu}C_{\nu j}M(z,z_{\nu j}),
\label{8.25}
\eeq
$M(z,\tau)$ is the quasimultiplicative kernel with the factors $H_j=H_j^{(1)}H_j^{(2)}$,
$C_0$ is a real constant and $C_{\nu j}=C_{\nu j}'+iC_{\nu j}''$ are complex constants.

\subsection{Definition of the constants}

In total, the general solution (\ref{8.24}) possesses $2\Gk+1$ real constants to be determined.
It will be convenient to have another representation of the number of the constants.
Since
\beq
\Gk=\sum_{\nu=0}^N\Gk_\nu,\quad \Gk_\nu=\left[\fr{n_\nu}{2}\right]+1,
\label{8.26}
\eeq
$n_\nu=2s_\nu$ if $\nu=0,1,\ldots, N_0$, and $n_\nu=2s_\nu-1$ if $\nu=N_0+1,N_0+2,\ldots, N$, we obtain
\beq
2\Gk+1=2s+2N_0+3, \quad s=\sum_{\nu=0}^N s_\nu.
\label{8.27}
\eeq
Show next that for the definition of these constants, we have the same number of 
linear equations. The first $N$ equations come from the conditions which guarantee that 
the solution $\GF(z)$ is invariant with respect to the group $\frak G$ 
\beq
\I\{H_k^{-1/2}[C_0+\GO_0(T_k(\Gd_0))]\}=0,
\quad k=1,2,\ldots,N.
\label{8.28}
\eeq
Because the zero of the kernel $M(z,\tau)$, the point $z_0\in\frak F$, can be chosen arbitrary we take here
and further $z_0=\infty$. This gives us $\Gs_k(\infty)=T_k(\Gd_0)$ and simplifies
the additional conditions.

The function $\GF(z)$ has to have a simple zero at infinity. This is guaranteed
by the complex condition 
$C_0+\ov{\GO_0(\Gd_0)}=0$, which is equivalent to the following two real equations: 
$$
C_0=-\R\{\GO_0(\Gd_0)\}, 
$$
\beq
\I\{\GO_0(\Gd_0)\}=0. 
\label{8.29}
\eeq
We have shown that if $n_\nu=2s_\nu$, then the parameter $\Ga_{\nu 1}=-\Gd_*-\fr12$ when $\Gb>0$
and $\Ga_{\nu 1}=\Gd_*-1$ when $\Gb<0$. This means that $2\Ga_{\nu 1}\in(-2,-1)$ when the number
of electrodes is even. Otherwise, $2\Ga_{\nu 1}\in(-1,0)$.  Thus, if $n_\nu=2s_\nu$, then the solution 
(\ref{8.24}) has a nonintegrable singularity at the points $t_{\nu 1}$. It becomes
an integrable singularity if the conditions (\ref{5.16}) hold. As in equations (\ref{8.28}) and (\ref{8.29}), we put
$z_0=\infty$ and use the fact that $l_\nu=1$ if $\nu=0,1,\ldots, N_0$ and $l_\nu=0$ for
$\nu>N_0$. This transforms equations (\ref{5.16}) to the following $N_0+1$ conditions
\beq
\R\{H_\nu^{-1/2}[C_0+2\GO_0(t_{\nu 1})-\GO_0(T_\nu(\Gd_0))]\}=0,\quad 
\nu=0,1,\ldots,N_0.
\label{8.30}
\eeq
In addition, we have the physical conditions (\ref{8.1})
which can be written in the form
\beq
\int\limits_{e_{\nu j}}J_n d\tau=\fr{J_{\nu j}}{h_0},\quad j=1,2,\ldots,n_\nu,
\quad \nu=0,1,\ldots,N,
\label{8.31}
\eeq
where
\beq
J_n=J_x\cos\Gt+J_y\sin\Gt, \quad J_x=\R\GF(z), \quad J_y=-\I\GF(z).
\label{8.32}
\eeq
The number $n=n_0+\ldots+n_N$ of the conditions (\ref{8.31}) can be expressed through the integers
$s$, $N$, and $N_0$ as follows:
\beq
n=2s-N+N_0.
\label{8.33}
\eeq
Thus, we obtain a system which consists of $N$ equations (\ref{8.28}), two conditions (\ref{8.29}),
$N_0+1$ relations (\ref{8.30}) and $2s-N+N_0$ equations (\ref{8.31}). As we had anticipated,
in total, there are $2s+2N_0+3$ real equations for the determination of $2s+2N_0+3$ real constants.
 
The first equation in (\ref{8.29}) expresses the real constant $C_0$ through the constants $C_{\nu j}$.
This makes possible to simplify the solution of the problem. The final formula has $2\Gk=2s+2N_0+2$
real constants and it becomes
\beq
J_x-iJ_y=\sum_{\nu=0}^N\sum_{j=1}^{\Gk_\nu}[C_{\nu j}'S_{\nu j}^+(z)+iC_{\nu j}''S_{\nu j}^-(z)],\quad z\in D,
\label{8.34}  
\eeq
where the functions $S^\pm_{\nu j}$ are free of the constants $C_{\nu j}$,
\beq
S_{\nu j}^\pm(z)=\Gc_1(z)\Gc_2(z)\left[-\fr12M(\Gd_0,z_{\nu j})\mp\fr12\ov{M(\Gd_0,z_{\nu j})}
+M(z,z_{\nu j})\pm\ov{M(T(z),z_{\nu j})}\right].
\label{8.35}
\eeq


\section*{Conclusions}

We have developed a method for the Hilbert problem for a circular multiply
connected domain and the Riemann-Hilbert problem for piece-wise analytic functions
invariant with respect to a symmetric Schottky group. The coefficients of both the problems
are piece-wise H\"older continuous functions, and the discontinuities of the coefficients
cause integrable singularities of the solution. The technique we have proposed requires
the use of two analogues of the Cauchy kernel, a quasiautomorphic kernel and
a quasimultiplicative kernel. We have proved the existence results for both the kernels. 
The existence of the former kernel follows from the theory of abelian integrals on a compact 
Riemann surface. To prove the existence of the quasimultiplicative kernel, we have
used the Riemann-Roch theorem for multiplicative functions. 
For the first class groups (the Burnside classification), the solution to the 
Hilbert and the Riemann-Hilbert problems have been derived in a series form. In addition,
we have obtained the solution in terms of an automorphic analogue of the Cauchy kernel.
It turns out that the use of this kernel requires the solution of the Jacobi inversion 
problem. We emphasize that the procedure which is based on the quasiautomorphic and
quasimultiplicative kernels bypasses the Jacobi inversion problem. There is another
advantage to employ the quasiautomorphic and quasimultiplicative  kernels, not the
automorphic kernel. For a ($N+1$)-connected circular domain
the second method leads to a solution which has $2N$ extra real constants and therefore, in comparison 
with the first method, there are $2N$ extra equations to be solved.  

The method proposed has been illustrated by the solution of a model electromagnetic
steady-state problem on the motion of charged electrons in a plate when the applied magnetic field
is orthogonal to the plate. The plate, known as a Hall plate, has $N+1$ circular holes
with electrodes and dielectrics on the walls. We have reduced the problem to a particular
case of the Hilbert problem with the coefficient $p(t)=p_1(t)p_2(t)$. The first function is 
continuous, and its factorization has been implemented by the method \cite{ant1}. The second function, $p_2(t)$,
is a piece-wise constant function. Because of this property and also because the first canonical function
has a zero at the starting point, we have managed to simplify the general formula for the second canonical function.
We have derived the exact formula for the current density. The formula possesses a finite number, $2\Gk$,
of unknown constants which solve a system of $2\Gk$ linear algebraic equations,
where 
$2\Gk=2s+2N_0+2$, $N_0$ is the number of circles with an even number of electrodes,
$s=s_0+s_1+\ldots s_N$, $s_\nu=[(n_\nu+1)/2]$, and $n_\nu$ is  the number of electrodes on the $\nu$-th circle. Note that the system of equations
for the constants consists of $N+N_0+2$ "mathematical" equations (due to the method) and $2s+N_0-N\ge 1$ physical equations. 

Finally, we notice that the technique proposed can be extended for polygonal multiply connected domains.
This can be done by implementing a two-step-procedure. First, one needs to map an $(N+1)$-connected circular
domain into an ($N+1$)-connected polygonal domain \cite{del}, \cite{cro} and define
the coefficients $a(t)$, $b(t)$, and $c(t)$. The second step is to use 
the solution to the Hilbert problem for ($N+1$)-connected circular domain derived in this paper.

\bibliographystyle{amsplain}

\end{document}